\theoremstyle{plain}
\newtheorem{theorem}{Theorem}[section]
\theoremstyle{remark}
\newtheorem{example}[theorem]{Example}
\theoremstyle{plain}
\newtheorem{corollary}[theorem]{Corollary}
\newtheorem{lemma}[theorem]{Lemma}
\newtheorem{proposition}[theorem]{Proposition}
\newtheorem{definition}[theorem]{Definition}
\numberwithin{equation}{section}
\def\N{{\mathbb N}}
\def\R{{\mathbb R}}
\def\C{{\mathbb C}}
\newcommand{\F}{{\mathscr F}}
\newcommand{\Rr}{\ensuremath{\mathcal{R}}}
\def \Ell{\rm Ell}
\newcommand{\calL}{{\mathscr L}}
\newcommand{\one}{{{\bf 1}}}
\newcommand{\lb}{\langle}
\newcommand{\rb}{\rangle}
\newcommand{\Schw}{{\mathscr S}}
\newcommand{\calK}{\mathcal{K}}
\def\avint_#1{\mathchoice%
      {\mathop{\kern 0.2em\vrule width 0.6em height 0.69678ex depth -0.58065ex
              \kern -0.8em \intop}\nolimits_{\kern -0.4em#1}}%
      {\mathop{\kern 0.1em\vrule width 0.5em height 0.69678ex depth -0.60387ex
              \kern -0.6em \intop}\nolimits_{#1}}%
      {\mathop{\kern 0.1em\vrule width 0.5em height 0.69678ex depth -0.60387ex
              \kern -0.6em \intop}\nolimits_{#1}}%
      {\mathop{\kern 0.1em\vrule width 0.5em height 0.69678ex depth -0.60387ex
              \kern -0.6em \intop}\nolimits_{#1}}}
\def \R{ \mathbb{R} }
\def \N{ \mathbb{N} }
\def \T{ \mathscr{T} }
\def \I{ \mathscr{I} }
\def \C{ \mathbb{C} }
\def \F{ \mathcal{F} }
\def \M{\mathcal{M}}
\def \Ell{\rm Ell}
\begin{document}
\numberwithin{equation}{section}

\author{Chiara Gallarati}
\address{Delft Institute of Applied Mathematics\\
Delft University of Technology \\ P.O. Box 5031\\ 2600 GA Delft\\The
Netherlands} \email{C.Gallarati@tudelft.nl}

\author{Mark Veraar}
\email{M.C.Veraar@tudelft.nl}

\date\today

\title
[Evolution families and maximal regularity for systems]{Evolution families and maximal regularity for systems of parabolic equations}

\begin{abstract}
In this paper we prove maximal $L^p$-regularity for a system of parabolic PDEs, where the elliptic operator $A$ has coefficients which depend on time in a measurable way and are continuous in the space variable. The proof is based on operator-theoretic methods and one of the main ingredients in the proof is the construction of an evolution family on weighted $L^q$-spaces.
\end{abstract}

\subjclass[2010]{Primary: 42B37, 47D06; Secondary: 34G10, 35B65, 42B15}

\keywords{Maximal $L^p$-regularity, evolution families, Fourier multipliers, elliptic operators, $A_p$-weights, $\Rr$-boundedness}



\thanks{The first author is supported by Vrije Competitie subsidy 613.001.206 and the second author by the Vidi subsidy 639.032.427 of the Netherlands Organisation for Scientific Research (NWO)}

\maketitle

\section{Introduction}

In this paper we study the evolution equation
\begin{equation}\label{prob:systemintro}
u'(t,x)+A(t)u(t,x)=f(t,x),\ t\in\R,\ x\in\R^{d}\\
\end{equation}
with and without initial value condition, and where $A$ is given by
\begin{equation} 
(A(t) u)(x) = \sum_{|\alpha|, |\beta|\leq m}  a_{\alpha\beta}(t,x)D^{\alpha} D^\beta u(t,x),
\nonumber
\end{equation}
and satisfies the Legendre-Hadamard ellipticity condition. The coefficients $a_{\alpha}:\R\times \R^d\to  \C^{N\times N}$ are assumed to be measurable in time and continuous in space.

Below in Theorems \ref{teo:mainxdep} and \ref{teo:mainxdepdiv} we derive maximal $L^p(L^q)$-regularity for \eqref{prob:systemintro} by applying a general operator-theoretic method recently discovered by the authors in \cite{GV}. There, the application to the scalar case (i.e.\ $N=1$) has already been considered.
In order to apply our method we construct the evolution family $S(t,s)$ generated by $A(t)$ in the case the coefficients are space independent.
This ``generation'' result is interesting on its own and can be found below in Theorem \ref{teoMihCondSys}.
Its proof is based on Fourier multiplier theory. Since we are dealing with systems the symbol is not explicitly known and only given as the solution to an ordinary differential equations. In order to provide estimates for the symbol, we use the implicit function theorem.
This paper gives another class of examples to which \cite{GV} is applicable. In future works we will consider applications to PDEs on bounded domains as well.

In the case the operator $A$ is time-independent an operator-theoretic characterization of maximal $L^p$-regularity has been obtained in \cite{Weis}. By this result it is enough to understand the precise behavior of the resolvents of $A$ instead of the parabolic problem itself. This approach has a wide range of applications and the development of the theory is still in progress (see \cite{DHP,KW} and references therein). In the case the coefficients are time-dependent the characterization of \cite{Weis} remains true under the additional assumption that $t\mapsto A(t)$ is continuous (see \cite{PS01}). Without continuity assumptions such a result seems to be unavailable.

A completely different approach has been developed in a series of papers in which $L^p(L^p)$-regularity results are derived where the coefficients are measurable in time and VMO (vanishing mean oscillation) in space (see the monograph \cite{krylov} and \cite{DK11} and references therein).
In the paper \cite{Krypq} a method for the case $p \geq q$ was introduced. Very recently in \cite{DK15} the full range of $p,q\in (1, \infty)$ has been considered in the case of systems as well.

The paper is organized as follows. In Section \ref{sec:preliminaries} we explain the notation and state our main results on existence and uniqueness. In Section \ref{sec:mainresult} we prove that in the case of $x$-independent coefficients $A(t)$ generates an evolution family on weighted $L^q$-spaces. In Section \ref{sec:proofs} we present the proofs of Theorems \ref{teo:mainxdep} and \ref{teo:mainxdepdiv} and we show how to deduce maximal regularity result for the initial value problem as well.

\smallskip

{\em Acknowledgment} – The authors would like to thank the anonymous referees for the careful reading and helpful comments.

\section{Assumptions and main results}\label{sec:preliminaries}

\subsection{Weights}
The main results will be stated in a weighted setting.
Details on Muckenhoupt weights can be found in \cite[Chapter 9]{GrafakosModern} and \cite[Chapter V]{SteinHA}.
A {\em weight} is a locally integrable function $w:\R^d\to (0,\infty)$.
A function $f:\R^d\to X$ is called {\em strongly measurable} if it is the a.e.\ limit of a sequence of simple functions.
For a Banach space $X$ and $p\in [1, \infty)$, $L^p(\R^d,w;X)$ is the space of
all strongly measurable functions $f:\R^d\to X$ such that
\begin{align*}
\|f\|_{L^p(\R^d,w;X)}=\Big(\int_{\R^d} \|f(x)\|^p w(x) \, dx\Big)^\frac{1}{p}<\infty.
\end{align*}

For $p\in (1, \infty)$ a weight $w$ is said to be an {\em $A_{p}$-weight} if
\begin{align*}
   [w]_{A_{p}}:=\sup_{Q} \avint_Q w(x) \, dx \Big(\avint_Q w(x)^{-\frac{1}{p-1}}\, dx \Big)^{p-1}<\infty.
\end{align*}
Here the supremum is taken over all cubes $Q\subseteq \R^d$ with axes parallel to the coordinate axes and $\avint_{Q} = \frac{1}{|Q|}\int_{Q}$. The extended real number $[w]_{A_{p}}$ is called the {\em $A_p$-constant}. A constant $C(w)$ is called {\em $A_{p}$-consistent} whenever \[[w_1]_{A_{p}}\leq [w_2]_{A_{p}} \ \Leftrightarrow \ C(w_1)\leq C(w_2).\]
One can check that $L^p(\R^d,w;X)\subseteq L^1_{\rm loc}(\R^d;X)$ for $w\in A_p$.

Let $p\in (1, \infty)$ and $w\in A_p$. For $f\in L^p(\R^d,w;X)$ we write $f\in W^{k,p}(\R^d,w;X)$ whenever for all multiindices $\alpha\in \N^d$ with $|\alpha|\leq k$, one has
$D^{\alpha} f\in L^p(\R^d,w;X)$ (in the sense of distributions). In this case we let
\[\|f\|_{W^{k, p}(\R^d,w;X)} = \sum_{|\alpha|\leq k} \|D^{\alpha} f\|_{L^p(\R^d,w;X)}.\]

\subsection{Ellipticity}
Consider an operator $A$ of the form
\[A = \sum_{|\alpha|\leq m, |\beta|\leq m} a_{\alpha\beta}D^{\alpha}D^{\beta}\]
where $a_{\alpha\beta}\in \C^{N\times N}$ are constant matrices and $D = -i(\partial_{1}, \ldots, \partial_d)$.  The \textit{principal symbol} of $A$ is defined as
\begin{equation}\label{eq:prinSymSystem}
A_{\#}(\xi):=\sum_{|\alpha| = |\beta|=m} \lb \xi^{\alpha},\ a_{\alpha\beta}\ \xi^{\beta}\rb,\ \ \ \xi\in\R^{d}.
\end{equation}
We say that $A$ is \textit{uniformly elliptic} of angle $\theta\in (0,\pi)$ if there exists a constant $\kappa\in (0,1)$ such that
\begin{equation}\label{eq:ellcondsys}
\sigma(A_{\#}(\xi))\subseteq\Sigma_{\theta}\cap \{\xi: |\xi|\geq \kappa\}, \xi\in \R^d, |\xi|=1,
\end{equation}
and there is a constant $K\geq 1$ such that $\|a_{\alpha\beta}\|\leq K$ for all $|\alpha|,|\beta|\leq m$.
In this case we write $A\in\Ell(\theta,\kappa,K)$.

We say that $A$ is {\em elliptic in the sense of Legendre--Hadamard} (see \cite{DK11, Fried64, GiaMart}) if there exists a constant $\kappa>0$ such that
\begin{equation}\label{eq:ellcondsys2}
\text{Re}(\lb x, A_{\#}(\xi) x\rb) \geq\kappa \|x\|^2,\ \ \ \xi\in\R^{d},\ |\xi|=1, x\in \C^N
\end{equation}
and there is a constant $K$ such that $a_{\alpha\beta}\leq K$ for all $|\alpha|,|\beta|\leq m$. In this case we write $A\in \Ell^{\rm LH}(\kappa, K)$. Obviously, \eqref{eq:ellcondsys2} implies \eqref{eq:ellcondsys} with $\theta = \arccos(\kappa/\tilde{K})\in (0,\pi/2)$, where $\tilde{K}$ depends only on $m$ and $K$.

\subsection{$L^p(L^q)$-theory for Systems of PDEs with time-dependent coefficients}
In order to state the main result consider the following system of PDEs
\begin{equation}\label{prob:systemintro2}
u'(t,x)+(\lambda+A(t))u(t,x)=f(t,x),\ \ \ t\in\R,\ x\in\R^{d}
\end{equation}
where $u,f:\R\times \R^d\to \C^N$ and $A$ is the following differential operator of order $2m$:
\begin{equation}\label{eq:defoperator}
(A(t) u)(x) = \sum_{|\alpha|\leq m,|\beta|\leq m} a_{\alpha\beta}(t,x)D^{\alpha} D^\beta u(x),
\end{equation}
where $a_{\alpha\beta}:\R\times \R^{d}\rightarrow \C^{N\times N}$. A function $u:\R\times\R^d\to \C^N$ is called a {\em strong solution} to \eqref{prob:systemintro2} when all the above derivatives (in distributional sense) exist in $L^1_{\rm loc}(\R\times\R^d;\C^N)$ and \eqref{prob:systemintro2} holds almost everywhere.

For $A$ of the form \eqref{eq:defoperator} and $x_0\in \R^d$ and $t_0\in \R$ let us introduce the notation:
\[A(t_0,x_0) := \sum_{|\alpha|\leq m, |\beta|\leq m} a_{\alpha\beta}(t_0,x_0) D^{\alpha}D^{\beta}.\]
for the operator with constant coefficients.

The coefficients of $A$ are only assumed to be measurable in time. More precisely, the following conditions on the coefficients are supposed to hold:
\begin{enumerate}
\item[\textbf{(C)}] Let $A$ be given by \eqref{eq:defoperator} and assume each $a_{\alpha\beta}:\R\times\R^d\to \C^{N\times N}$ is measurable.
We assume there exist $\kappa$, $K$ such that for all $t_0\in \R$ and $x_0\in \R^d$, $A(t_0, x_0)\in \Ell^{\rm LH}(\kappa, K)$.
Assume there exists an increasing function $\omega:(0,\infty)\to (0,\infty)$ with the property $\omega(\varepsilon)\rightarrow 0$ as $\varepsilon\downarrow 0$ and such that
    \begin{align*}
    \|a_{\alpha\beta}(t,x)-a_{\alpha\beta}(t,y)\|\leq \omega(|x-y|), \ \ |\alpha| = |\beta|= m, \ t\in \R, \ x,y\in\R^{d}.
\end{align*}
\end{enumerate}

The first main result is on the maximal regularity for \eqref{prob:systemintro2}.
\begin{theorem}[Non-divergence form]\label{teo:mainxdep}
Let $p,q\in (1,\infty)$, $v\in A_{p}(\R)$, $w\in A_{q}(\R^{d})$, $X_0 = L^q(\R^d,w;\C^N)$ and $X_1 = W^{2m,q}(\R^d,w;\C^N)$. Assume condition (C) holds.
Then there exists an $A_p$-$A_q$-consistent constant $\lambda_0$ such that for all $\lambda\geq\lambda_0$ and every $f\in L^{p}(\R,v;X_{0})$ there exists a unique strong solution $u\in W^{1,p}(\R,v;X_0)\cap L^p(\R,v;X_1)$ of \eqref{prob:systemintro2}. Moreover, there is an $A_p$-$A_q$-consistent constant $C$ depending on $v$, $w$, $p$, $q$, $d$, $m$, $\kappa$, $K$ and $\omega$ such that for all $\lambda\geq \lambda_0$,
\begin{equation}\label{eq:teomainxdep}
\lambda\|u\|_{L^{p}(\R,v;X_{0})}+\|u\|_{L^p(\R,v;X_1)}+\|u\|_{W^{1,p}(\R,v;X_0)}\leq C\|f\|_{L^{p}(\R,v;X_{0})}.
\end{equation}
\end{theorem}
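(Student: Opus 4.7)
The plan is to reduce Theorem \ref{teo:mainxdep} to the $x$-independent setting via a freezing-of-coefficients argument and then apply the operator-theoretic machinery of \cite{GV} together with Theorem \ref{teoMihCondSys}. For each fixed $x_0 \in \R^d$, the operator $A(\cdot, x_0)$ has coefficients depending only on $t$, so by Theorem \ref{teoMihCondSys} it generates an evolution family $S_{x_0}(t,s)$ on $X_0$ with $\Rr$-bounded resolvents whose bounds depend only on $\kappa, K, m, d$ and the weights, hence are uniform in $x_0$. Feeding this family into the framework of \cite{GV} yields the a priori estimate \eqref{eq:teomainxdep} for $u' + (\lambda + A(t, x_0)) u = g$ with an $A_p$-$A_q$-consistent constant $C_0$ and an $A_p$-$A_q$-consistent threshold $\lambda_0^{(0)}$, both uniform in $x_0$.

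To treat genuine $x$-dependence, split $A(t) = A_{\#}(t) + A_{\mathrm{low}}(t)$ into its top-order ($|\alpha|=|\beta|=m$) and lower-order parts. Fix $\delta > 0$ with $C_0\, \omega(2\delta) \leq \tfrac{1}{2}$, a locally finite cover $\{B(x_j, \delta)\}_j$ of $\R^d$ of bounded overlap, and a subordinate partition of unity $\{\eta_j\} \subset C_c^\infty$ with $\supp \eta_j \subset B(x_j, 2\delta)$ and $\|D^\gamma \eta_j\|_\infty \leq C_\gamma \delta^{-|\gamma|}$. For each $j$ the truncation $\eta_j u$ solves
\[
(\eta_j u)' + (\lambda + A(t,x_j))(\eta_j u) = \eta_j f + \eta_j\bigl(A_{\#}(t,x_j) - A_{\#}(t)\bigr)u - \eta_j A_{\mathrm{low}}(t) u + [A(t,x_j), \eta_j] u.
\]
By condition (C) the first perturbation has norm $\leq \omega(2\delta)\|u\|_{X_1}$ on $\supp \eta_j$; the remaining two are differential operators of order at most $2m-1$ acting on $u$. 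Applying the Step-1 estimate for the frozen operator $A(t, x_j)$ on each piece, raising to the $p$-th power, and summing over $j$ using bounded multiplicity yields, for all $\lambda \geq \lambda_0^{(0)}$,
\[
\lambda \|u\|_{L^p(X_0)} + \|u\|_{L^p(X_1)} + \|u\|_{W^{1,p}(X_0)} \leq C_0 \|f\|_{L^p(X_0)} + \tfrac{1}{2}\|u\|_{L^p(X_1)} + C_\delta \|u\|_{L^p(\R,v;W^{2m-1,q}(\R^d,w;\C^N))}.
\]
An $A_q$-consistent interpolation inequality $\|u\|_{W^{2m-1,q}(w)} \leq \varepsilon \|u\|_{X_1} + C_\varepsilon \|u\|_{X_0}$ then lets one absorb the last term, half into $\|u\|_{L^p(X_1)}$ and the other half into $\lambda\|u\|_{L^p(X_0)}$ after enlarging $\lambda_0$. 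Uniqueness at $\lambda \geq \lambda_0$ follows by applying the same a priori estimate to the difference of two strong solutions.

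The main obstacle is keeping all constants $A_p$-$A_q$-consistent through the patching step: one must check that the bounded-multiplicity summation over $j$ is compatible with the vector-valued weighted norm $L^p(\R,v; L^q(\R^d, w; \C^N))$, and that the interpolation inequality used to absorb the commutator and lower-order contributions holds on $W^{2m,q}(\R^d, w; \C^N)$ with constants depending only on $m, d, q$ and $[w]_{A_q}$. Once this is granted, the choice of $\delta$ (hence of $\lambda_0$) depends only on $\omega, \kappa, K, m, d, p, q$ and the weight constants, as required by the statement.
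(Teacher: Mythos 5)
Your two-step structure --- first the $x$-independent case via Theorem \ref{teoMihCondSys}, Propositions \ref{prop:weightedRextended} and \ref{prop:evfamSys} and the abstract result in \cite{GV}, then a freezing-of-coefficients localization with a partition of unity, using the modulus $\omega$ and an interpolation estimate to absorb the errors for $\lambda$ large --- is exactly the paper's strategy; the paper delegates the second step to \cite[Theorem~5.4]{GV}. Step 1 is fine as sketched (modulo the minor imprecision that the hypothesis fed into \cite{GV} is $\Rr$-boundedness of the family of kernel-smeared integral operators, not of the resolvents).

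The gap is in the mechanism you use to recombine the localized estimates. You apply the frozen-coefficient maximal-regularity estimate to each $\eta_j u$ in $L^p(\R,v;L^q(\R^d,w;\C^N))$, then ``raise to the $p$-th power and sum over $j$ using bounded multiplicity.'' That closes only when $p=q$. Bounded overlap gives, for each fixed $t$, $\sum_j \|\eta_j g(t)\|_{L^q(w)}^q \approx \|g(t)\|_{L^q(w)}^q$; but to recombine you must commute an $\ell^q_j$-sum with the outer $L^p$-in-$t$ norm. Comparing $\sum_j \|\eta_j u\|_{L^p(L^q)}^p$ with $\int \big(\sum_j \|\eta_j u(t)\|_{L^q}^q\big)^{p/q} v\,dt$ from below requires $p\leq q$, while comparing $\sum_j \|g_j\|_{L^p(L^q)}^p$ with $\|f\|_{L^p(L^q)}^p$ from above requires $p\geq q$; these are incompatible unless $p=q$. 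This is precisely the difficulty that made the mixed-norm case nontrivial historically (compare \cite{Krypq}, which handles $p\geq q$, and \cite{DK15}), and it is why the paper invokes \cite[Theorem~5.4]{GV}, whose localization argument runs through $\Rr$-boundedness and $A_p$--$A_q$ extrapolation rather than a direct power sum. You correctly flag ``the compatibility of the bounded-multiplicity summation with the vector-valued weighted norm'' as the main obstacle, but the mechanism you propose does not resolve it; as written, the argument only proves the theorem for $p=q$.
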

Note that the constant $C$ does not depend on the dimension $N$. Actually, our proof allows a generalization to infinite dimensional systems, but we will not consider this here.

\medskip

A similar result holds in the case $A(t)$ is in divergence form:
\begin{equation}\label{prob:systemintrodiv}
u'(t,x)+(\lambda+A_{\rm div}(t))u(t,x)=\sum_{|\alpha|\leq m} D^{\alpha} f_{\alpha}(t,x),\ \ \ t\in\R,\ x\in\R^{d}
\end{equation}
where $u,f_{\alpha},g:\R\times \R^d\to \C^N$. Here $A_{\rm div}$ is the following differential operator of order $2m$:
\begin{equation}\label{eq:defoperatordiv}
(A_{\rm div}(t) u)(x) = \sum_{|\alpha|\leq m,|\beta|\leq m} D^{\alpha}\big(a_{\alpha\beta}(t,x)D^\beta u(x)\big),
\end{equation}
where $a_{\alpha\beta}:\R\times \R^{d}\rightarrow \C^{N\times N}$. Again we assume the same condition (C).
We say that $u\in L^1_{\rm loc}(\R\times \R^d)$ is a {\em weak solution} of \eqref{prob:systemintrodiv} if $\nabla^m u\in L^1_{\rm loc}(\R\times \R^d)$ exists in the weak sense and for all $\varphi\in C^\infty_c(\R\times\R^d)$,
\begin{align*}
\int_{\R^{d+1}}  -\big<u, \varphi'\big> + & \lambda \big< u,\varphi\big>   +  (-1)^{|\alpha|} \big< a_{\alpha\beta} D^{\beta} u, D^{\alpha}\varphi\big> \, d(t,x)
\\ & = \int_{\R^{d+1}} (-1)^{|\alpha|}  \big< f, D^{\alpha} \varphi\big> \, d(t,x),
\end{align*}
where we used the summation convention.

\begin{theorem}[Divergence form]\label{teo:mainxdepdiv}
Let $p,q\in (1,\infty)$, $v\in A_{p}(\R)$, $w\in A_{q}(\R^{d})$, $X_0 = L^q(\R^d,w;\C^N)$ and $X_{\frac{1}{2}} = W^{m,q}(\R^d,w;\C^N)$. Assume condition (C) holds for $A_{\rm div}$ as in \eqref{eq:defoperatordiv}. Then there exists an $A_p$-$A_q$-consistent constant $\lambda_0$ such that for all $\lambda\geq\lambda_0$ and every $(f_\alpha)_{|\alpha|\leq m}$ in $L^{p}(\R,v;X_{0})$ there exists a unique weak solution $u\in L^p(\R,v;X_{\frac12})$ of \eqref{prob:systemintrodiv}. Moreover, there is an $A_p$-$A_q$-consistent constant $C$ depending on $v$, $w$, $p$, $q$, $d$, $m$, $\kappa$,$K$ and $\omega$ such that
\begin{equation}\label{eq:teomainxdepdiv}
\begin{aligned}
\sum_{|\alpha|\leq m} \lambda^{1-\frac{|\alpha|}{m}} \|D^{\alpha}u\|_{L^{p}(\R,v;X_{0})} & \leq C\sum_{j=1}^d \lambda^{\frac{|\alpha|}{m}} \|f_\alpha\|_{L^{p}(\R,v;X_{0})}.
\end{aligned}
\end{equation}
\end{theorem}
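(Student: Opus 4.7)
The plan is to run the same operator-theoretic machinery as for Theorem \ref{teo:mainxdep}, but on the shifted pair of weighted Sobolev spaces $(\widetilde X_0,\widetilde X_1):=(W^{-m,q}(\R^d,w;\C^N),W^{m,q}(\R^d,w;\C^N))$, where $\widetilde X_0$ is realised as the space of distributions of the form $\sum_{|\alpha|\leq m} D^\alpha f_\alpha$ with $f_\alpha\in L^q(\R^d,w;\C^N)$, equipped with the natural quotient norm. The right-hand side of \eqref{prob:systemintrodiv} then lies in $L^p(\R,v;\widetilde X_0)$, and a strong solution of the lifted Cauchy problem $u'+(\lambda+A_{\rm div}(t))u=F$ in $\widetilde X_0$ with $u\in L^p(\R,v;\widetilde X_1)$ corresponds exactly to a weak solution in the sense stated before the theorem.

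The first step is to treat coefficients $a_{\alpha\beta}(t)$ that are independent of $x$. In that case $A_{\rm div}(t)$ and the non-divergence operator with the same coefficients coincide formally, so the Fourier multiplier construction of Theorem \ref{teoMihCondSys} produces an evolution family $S(t,s)$ that acts by translation invariance of the symbol on every weighted Sobolev space $W^{k,q}(\R^d,w;\C^N)$, in particular on both $\widetilde X_0$ and $\widetilde X_1$. Plugging this into the abstract framework of \cite{GV} for the pair $(\widetilde X_0,\widetilde X_1)$ yields $L^p$-maximal regularity with an $A_p$-$A_q$-consistent constant, and a parabolic scaling $\xi\mapsto \lambda^{1/(2m)}\xi$ in the symbol rewrites the resulting a priori estimate in the inhomogeneous form \eqref{eq:teomainxdepdiv}.

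The second step is to pass from $x$-independent to $x$-dependent coefficients by a freezing-of-coefficients and partition-of-unity argument that mirrors the one used for Theorem \ref{teo:mainxdep}: for each $t$ and each small ball $B_\varepsilon(x_0)$, write $A_{\rm div}(t)=A_{\rm div}(t,x_0)+R(t,x_0)$ and use the modulus $\omega$ in condition (C) to estimate $R(t,x_0):\widetilde X_1\to\widetilde X_0$ as small on functions supported near $x_0$. Combined with standard commutator bounds for $[D^\alpha,\chi]$ on weighted Sobolev spaces, this permits patching the local $x$-independent estimates and absorbing the perturbation into the left-hand side of \eqref{eq:teomainxdepdiv}, provided $\varepsilon$ is small and $\lambda$ is large. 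Uniqueness of the weak solution follows by duality: the formal adjoint of $\lambda+A_{\rm div}(t)$ is a non-divergence operator of the form covered by Theorem \ref{teo:mainxdep} on $L^{p'}(\R,v^{1-p'};L^{q'}(\R^d,w^{1-q'};\C^N))$, since Legendre--Hadamard ellipticity and the modulus $\omega$ are preserved under transposition of the matrices $a_{\alpha\beta}$.

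The main obstacle I expect is keeping the $\lambda$-bookkeeping in \eqref{eq:teomainxdepdiv} consistent through the freezing procedure: for each mixed order $|\alpha|\leq m$ the weights $\lambda^{1-|\alpha|/m}$ on the left and $\lambda^{|\alpha|/m}$ on the right must balance against the lower-order commutators $[D^\alpha,\chi]$ introduced by cut-offs, and this has to be tracked simultaneously in the quotient norm on $\widetilde X_0$. Once this bookkeeping is done, the rest is a careful but routine application of the operator-theoretic framework of \cite{GV} and the evolution family supplied by Theorem \ref{teoMihCondSys} in the shifted scale.
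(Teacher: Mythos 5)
Your approach is genuinely different from the paper's, and while the broad strategy (pass to a shifted Sobolev scale, then localize) is a standard way to handle divergence-form problems, it is considerably heavier than what the paper actually does and has two concrete gaps.

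\textbf{What the paper does.} For $x$-independent coefficients the paper sidesteps the shifted scale entirely. It uses commutativity of $D^\alpha$ with $A(t)$ (valid because the coefficients do not depend on $x$): for each $\alpha$ one solves the \emph{non-divergence} problem $v_\alpha'+(\lambda+A(t))v_\alpha=f_\alpha$ using Theorem~\ref{teo:mainxdep}, reads off the intermediate estimates $\lambda^{1-|\beta|/(2m)}\|D^{\beta+\alpha}v_\alpha\|\lesssim\lambda^{|\alpha|/(2m)}\|f_\alpha\|$ from Theorems~\ref{teoHHHsys} and~\ref{teo:mainxdep}, and then sets $u=\sum_{|\alpha|\leq m}D^\alpha v_\alpha$. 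Since $D^\alpha$ and $A$ commute, this $u$ is automatically a weak solution of \eqref{prob:systemintrodiv} and \eqref{eq:teomainxdepdiv} follows by summing. Uniqueness then comes for free from linearity and the a priori estimate. This is the Krylov-style reduction (\cite[Theorem~4.4.2]{krylov}) and it avoids re-running any abstract machinery. The localization step is the same as yours (mirroring \cite[Theorem~5.4]{GV} and \cite[Section~13.6]{krylov}).

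\textbf{Gap 1 in your approach: the $\Rr$-boundedness input does not transfer for free.} To apply \cite[Theorem~4.9]{GV} on the pair $(\widetilde X_0,\widetilde X_1)=(W^{-m,q},W^{m,q})$ you need the $\Rr$-boundedness of the family of integral operators $\I$ on $L^p(\R,v;\widetilde X_0)$, but Proposition~\ref{prop:weightedRextended} is stated only for ground spaces of the form $L^{q}(\R^d,w;H)$ with $H$ a Hilbert space; $W^{-m,q}(\R^d,w;\C^N)$ is not of this form. One can indeed transport $\Rr$-boundedness through the Bessel-potential isomorphism $(1-\Delta)^{m/2}:W^{-m,q}\to L^q$, using that it is a translation-invariant multiplier commuting with the evolution family, but this is a genuine additional step that you do not address, and it is precisely the step the paper's direct reduction makes unnecessary.

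\textbf{Gap 2: the duality argument for uniqueness is wrong.} The formal adjoint of $\sum_{|\alpha|,|\beta|\leq m}D^\alpha(a_{\alpha\beta}D^\beta\,\cdot\,)$ is $\sum_{|\alpha|,|\beta|\leq m}D^\beta(a_{\alpha\beta}^*D^\alpha\,\cdot\,)$, which is again in \emph{divergence} form, not in the non-divergence form covered by Theorem~\ref{teo:mainxdep}. (Only in the $x$-independent case do the two forms coincide.) So you cannot invoke the non-divergence theorem on the dual space for uniqueness in the general case; you would need the divergence-form result on the dual scale, which is circular. The paper avoids this: uniqueness is an immediate consequence of linearity and the estimate \eqref{eq:teomainxdepdiv} itself.

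Net effect: your route could be made to work, but only after extending the $\Rr$-boundedness proposition to the shifted scale and replacing the duality argument by the linearity argument. The paper's $u=\sum D^\alpha v_\alpha$ trick is both shorter and avoids both issues.
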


\section{Generation of evolution families}\label{sec:mainresult}

In this section we will show that in the case $A(t)$ has $x$-independent coefficient it generates a strongly continuous evolution family $S(t,s)$. Recall that a function is called strongly continuous if it is continuous in the strong operator topology.
Before we turn to the proof, we recall some generalities on evolution families. For details on evolution families we refer to \cite{AT2, EN, Lun, Pazy, Schn, Ta1, Ta2, Ya} and references therein.

Assume $X_1\hookrightarrow X_0$ are Banach spaces.
Recall the following definition from \cite{GV}.

\begin{definition}\label{def:evolut}
Let $(A(t))_{t\in \R}$ be a family of bounded linear operators from $X_1$ into $X_0$ such that for all $x\in X_1$, $t\mapsto A(t)x$ is measurable.
A two parameter family of bounded linear operators $S(t,s)$, $s\leq t$, on a Banach space $X_{0}$ is called an \textit{evolution family for $(A(t))_{t\in \R}$} if the following conditions are satisfied:
\begin{itemize}
\item[(i)] $S(s,s)=I,\ S(t,r)S(r,s)=S(t,s)$ for $s\leq r\leq t$;
\item[(ii)] $(t,s)\rightarrow S(t,s)$ is strongly continuous for $s\leq t$.
\item[(iii)] For all $s\in \R$ and $T\in (s, \infty)$, for all $x\in X_1$, the function $u:[s,T]\to X_0$
defined by $u(t) = S(t,s)x$ is in $L^1(s,T;X_1)\cap W^{1,1}(s,T;X_0)$ and satisfies $u'(t) + A(t) S(t,s)x=0$ for almost all $t\in(s,T)$.
\item[(iv)]
For all $t\in \R$ and $T\in (-\infty, t]$ for all $x\in X_1$, the function $u:[T,t]\to X_0$ defined by $u(s) = S(t,s)x$ is in $L^{1}(T,t;X_1)\cap W^{1,1}(T,t;X_0)$ and satisfies $u'(s) = S(t,s) A(s) x$ for almost all $s\in (T, t)$.
\end{itemize}
\end{definition}
The above definition differs from the usual one from the literature, because $t\mapsto A(t)$ is only assumed to be measurable in time. Therefore, one cannot expect $S(t,s)x$ to be differentiable in the classical sense.

For a strongly measurable function $f:(a,b)\to X_{0}$ and $x\in X_0$ consider:
\begin{equation}\label{eqprobabstinvalprel}
\begin{cases}
u'(t)+A(t)u(t)=f(t),\ t\in (s,b)\\
u(s)=x.
\end{cases}
\end{equation}
If $A(t)$ generates an evolution family $S(t,s)$, then for all $x\in X_1$, $u(t) = S(t,s)x$ is a strong solution to \eqref{eqprobabstinvalprel} with $f=0$.

\subsection{On the sectoriality of the operator}

First consider the case $a_{\alpha\beta}$ is time and space independent:

\begin{equation}\label{eq:operatortimexind}
(A u)(x) = \sum_{|\alpha| = |\beta|=m}  a_{\alpha\beta}D^{\alpha} D^\beta u(x),
\end{equation}

The next result can be found in \cite[Theorem 3.1]{HHH}, where the case of $x$-dependent coefficients is considered as well.
\begin{theorem}\label{teoHHHsys}
Let $A$ be of the form \eqref{eq:operatortimexind} and assume there exist
$\theta$, $\kappa>0$ and $K>0$ such that $A \in \Ell(\theta,\kappa,K)$.
Let $1<q<\infty$ and $w\in A_q$ and let $X_0 = L^q(\R^d,w;\C^N)$.
Then there exists an $A_{q}$-consistent constant $C$ depending on the parameters $m,d,\kappa,K,q$ such that
\begin{equation}\label{eqteoHHHsys}
\|\lambda^{1-\frac{|\beta|}{m}}D^{\beta}(\lambda+A)^{-1}\|_{\calL(X_0)}\leq C,\ \ \ |\beta|\leq m,\ \lambda\in\Sigma_{\pi-\theta}.
\end{equation}
\end{theorem}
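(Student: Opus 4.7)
The plan is to realize $\lambda^{1-|\beta|/m}D^{\beta}(\lambda+A)^{-1}$ as a Fourier multiplier on $L^{q}(\R^{d},w;\C^{N})$ with matrix-valued symbol
\[
m_{\lambda,\beta}(\xi) := \lambda^{1-|\beta|/m}\,\xi^{\beta}\,(\lambda+A_{\#}(\xi))^{-1}\in\C^{N\times N},
\]
and to control its multiplier norm uniformly in $\lambda\in\Sigma_{\pi-\theta}$ via a weighted Mihlin--H\"ormander theorem whose constant depends on $w$ only through $[w]_{A_{q}}$, hence is $A_{q}$-consistent. The ellipticity hypothesis $A\in\Ell(\theta,\kappa,K)$ together with the homogeneity $A_{\#}(t\xi)=t^{2m}A_{\#}(\xi)$ gives invertibility of $\lambda+A_{\#}(\xi)$ for $\lambda\in\Sigma_{\pi-\theta}$ and $\xi\neq 0$, along with the sectorial bound
\[
\|(\lambda+A_{\#}(\xi))^{-1}\|\leq \frac{C(\theta,\kappa)}{|\lambda|+|\xi|^{2m}}.
\]

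The next step is to verify the Mihlin condition uniformly in $\lambda$. Setting $s:=|\lambda|^{1/(2m)}$, $\zeta:=\lambda/|\lambda|$ and $\eta:=\xi/s$, homogeneity rewrites
\[
\xi^{\beta}(\lambda+A_{\#}(\xi))^{-1} = s^{|\beta|-2m}\,\eta^{\beta}(\zeta+A_{\#}(\eta))^{-1},
\]
and the dilation $\xi\mapsto\eta=\xi/s$ leaves the Mihlin quantity $|\xi|^{|\gamma|}\|D^{\gamma}_{\xi}(\cdot)\|$ scale-invariant. I would then expand $D^{\gamma}_{\eta}(\zeta+A_{\#}(\eta))^{-1}$ by the resolvent/Leibniz identity
\[
D^{\gamma}(\zeta+A_{\#})^{-1} = \sum \pm (\zeta+A_{\#})^{-1}(D^{\gamma_{1}}A_{\#})(\zeta+A_{\#})^{-1}\cdots (D^{\gamma_{k}}A_{\#})(\zeta+A_{\#})^{-1},
\]
and use that each $D^{\gamma_{j}}A_{\#}(\eta)$ is a polynomial homogeneous of degree $2m-|\gamma_{j}|$, together with the sectorial bound on $(\zeta+A_{\#}(\eta))^{-1}$ for $|\zeta|=1$, to arrive at
\[
\sup_{\lambda\in\Sigma_{\pi-\theta}}\,\sup_{\xi\neq 0}|\xi|^{|\gamma|}\,\|D^{\gamma}m_{\lambda,\beta}(\xi)\|\leq C(\theta,\kappa,K,m,d,|\gamma|),
\]
for every multi-index $\gamma$ with, say, $|\gamma|\leq d+1$.

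With the uniform Mihlin bound in hand, the matrix-valued weighted Mihlin--H\"ormander multiplier theorem, applied entrywise in the Kurtz--Wheeden form for $A_{q}$-weights, yields
\[
\|T_{m_{\lambda,\beta}}\|_{\calL(L^{q}(\R^{d},w;\C^{N}))} \leq C(q,d,[w]_{A_{q}})\,\sup_{|\gamma|\leq d+1}\sup_{\xi\neq 0}|\xi|^{|\gamma|}\|D^{\gamma}m_{\lambda,\beta}(\xi)\|,
\]
with an $A_{q}$-consistent constant, which is \eqref{eqteoHHHsys}.

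The main obstacle I expect is the bookkeeping in the Mihlin verification: since $(\lambda+A_{\#}(\xi))^{-1}$ is only known implicitly (no closed form when $N\geq 2$), one must handle derivatives through the resolvent/Leibniz expansion above and match the homogeneity degrees of the $D^{\gamma_{j}}A_{\#}$ against powers of the parabolic scale $|\lambda|^{1/(2m)}$ to extract the uniform estimate. Once this combinatorics is done and ellipticity is translated into a quantitative lower bound on $\lambda+A_{\#}(\xi)$, the Mihlin condition follows from compactness on the unit sphere $|\eta|=1$ together with the scaling, and the weighted multiplier theorem closes the argument.
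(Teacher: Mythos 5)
Your approach matches the paper's: realize $\lambda^{1-|\beta|/m}D^{\beta}(\lambda+A)^{-1}$ as a Fourier multiplier with symbol $m_{\lambda,\beta}(\xi)=\lambda^{1-|\beta|/m}\xi^{\beta}(\lambda+A_{\#}(\xi))^{-1}$, prove the resolvent bound $\|(\lambda+A_{\#}(\xi))^{-1}\|\lesssim(|\lambda|+|\xi|^{2m})^{-1}$ from ellipticity and homogeneity (Lemma \ref{Lemmaeq3.6duongsimon}), verify the Mihlin condition uniformly in $\lambda$ via the Leibniz/resolvent expansion, and invoke the weighted Mihlin multiplier theorem from \cite{GarciaRubio}. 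One point to correct: you propose applying the scalar Kurtz--Wheeden weighted multiplier theorem \emph{entrywise} to the matrix symbol, which would introduce a dependence on $N$ into the constant, whereas the statement requires $C$ to depend only on $m,d,\kappa,K,q$ and $[w]_{A_q}$ (and the paper stresses $N$-independence); one should instead use the $\calL(\C^N)$-valued version of the weighted Mihlin theorem, checking the Mihlin condition in operator norm exactly as you already compute it, as noted in the paper via \cite[Theorem 6.1.6]{BergLof}.
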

Later on the above result will be applied to the operator $A(t)$ for fixed $t\in \R$.
To prove \eqref{eqteoHHHsys} it suffices to check that for every $\lambda\in \Sigma_{\pi-\theta}$, and $|\beta|\leq m$, the symbol $\mathcal{M}:\R^d\to \C$ given by
\[\mathcal{M}(\xi) = \lambda^{1-\frac{|\beta|}{m}} \xi^{\beta} (\lambda+A_{\#}(\xi))^{-1}\]
satisfies the following type of Mihlin's condition: for every multiindex $\alpha\in \N^d$,  there is a constant $C_{\alpha}$ which only depends on
$d, \alpha, \theta,\theta_0, K, \kappa$ such that
\begin{equation}\label{eq:Mihlincond}
|\xi|^{\alpha} |D^{\alpha} \mathcal{M}(\xi)| \leq C_{\alpha}, \ \ \ \xi\in \R^d.
\end{equation}
Indeed, then the result is a consequence of the weighted version of Mihlin's multiplier theorem as in \cite[Theorem IV.3.9]{GarciaRubio}. Note  that this extends to the $\calL(\C^N)$-valued case (see \cite[Theorem 6.1.6]{BergLof} for the unweighted case). The proof of \eqref{eq:Mihlincond} follows from elementary calculus and the following lemma taken from \cite[Proposition 3.1]{DuongSimonett}. For convenience and in order to track the constants in the estimates, we present the details.

\begin{lemma}\label{Lemmaeq3.6duongsimon}
Let $A\in\Ell(\theta_0,\kappa,K)$ be of the form \eqref{eq:operatortimexind}
with $\kappa\in(0,1)$, $K>0$ and $\theta_0\in (0,\pi)$. Let $\theta\in (\theta_0,\pi)$ be fixed.
Then there is a positive constant $C=C(\kappa,\theta_0,\theta)$ such that
\begin{equation}\label{eq3.6DS}
\|(A_{\#}(\xi) + \lambda)^{-1}\|\leq C(|\xi|^{m} + |\lambda|)^{-1},\ \ (\lambda,\xi)\in \Sigma_{\pi-\theta}\backslash\{0\}\times\R^{d},
\end{equation}
where $A_{\#}$ is the principal symbol of $A$.
\end{lemma}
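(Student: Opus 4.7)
My plan is to first reduce the estimate to the unit sphere by homogeneity, and then treat the large-$|\mu|$ and bounded-$|\mu|$ regimes separately. Since $A_{\#}$ is homogeneous of degree $2m$, setting $\eta := \xi/|\xi|\in S^{d-1}$ and $\mu := \lambda/|\xi|^{2m}$ gives
\[
(A_{\#}(\xi)+\lambda)^{-1} = |\xi|^{-2m}\bigl(A_{\#}(\eta)+\mu\bigr)^{-1},
\]
with $\mu\in\Sigma_{\pi-\theta}$ (since $|\xi|^{2m}>0$) and $|\xi|^{2m}+|\lambda| = |\xi|^{2m}(1+|\mu|)$. Thus the claim reduces to the uniform estimate
\[
\|(A_{\#}(\eta)+\mu)^{-1}\|\leq C\,(1+|\mu|)^{-1}, \qquad \eta\in S^{d-1},\ \mu\in\overline{\Sigma_{\pi-\theta}}.
\]

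For the large-$|\mu|$ regime, I would use the Neumann series. The quantity $M_0 := \sup_{\eta\in S^{d-1}}\|A_{\#}(\eta)\|$ is finite, so for $|\mu|\geq 2M_0$ the identity $(A_{\#}(\eta)+\mu)^{-1} = \mu^{-1}(I+\mu^{-1}A_{\#}(\eta))^{-1}$ yields $\|(A_{\#}(\eta)+\mu)^{-1}\|\leq 2/|\mu|$, which is already of the required shape. For the bounded regime $|\mu|\leq 2M_0$, I would argue by compactness: the set $\Omega := S^{d-1}\times\{\mu\in\overline{\Sigma_{\pi-\theta}}:|\mu|\leq 2M_0\}$ is compact, and the map $(\eta,\mu)\mapsto A_{\#}(\eta)+\mu$ is continuous from $\Omega$ into $\C^{N\times N}$; provided its image lies inside the invertible matrices, continuity of matrix inversion forces $(\eta,\mu)\mapsto(A_{\#}(\eta)+\mu)^{-1}$ to be uniformly bounded on $\Omega$.

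The main obstacle will be verifying this invertibility throughout $\Omega$. For $\mu\neq 0$, the sector constraint $|\arg\mu|\leq\pi-\theta$ forces $|\arg(-\mu)|\geq\theta>\theta_0$, so $-\mu$ lies strictly outside $\Sigma_{\theta_0}\supseteq\sigma(A_{\#}(\eta))$; this is precisely where the strict gap $\theta_0<\theta$ is used. At $\mu=0$, invertibility of $A_{\#}(\eta)$ is instead supplied by the lower spectral bound $|z|\geq\kappa$ built into the ellipticity hypothesis $A\in\Ell(\theta_0,\kappa,K)$. Combining the two regimes and taking the maximum of the resulting constants will yield the stated inequality.
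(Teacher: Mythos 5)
Your argument establishes invertibility and then invokes compactness, which does prove that \emph{some} finite constant exists for the fixed operator $A$ under consideration, but it does not deliver the constant with the dependence claimed in the lemma, and this is not a cosmetic point. The compactness step bounds $(\eta,\mu)\mapsto\|(A_{\#}(\eta)+\mu)^{-1}\|$ on the compact set $\Omega$ for \emph{this particular} $A_{\#}$; the resulting supremum is a function of the coefficient matrices $(a_{\alpha\beta})$ themselves, not merely of $(\kappa,\theta_0,\theta)$ (and likewise $M_0$ in your Neumann step already depends on $K,m,d$). The lemma asserts a constant $C(\kappa,\theta_0,\theta)$, and what is actually used downstream (Theorem \ref{teoHHHsys} applied to the family $A(t)$, uniformly in $t$) is precisely that $C$ is uniform over the whole class $\Ell(\theta_0,\kappa,K)$. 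To repair your argument one would have to enlarge the compact set $\Omega$ to also range over the coefficients: the constraints $\|a_{\alpha\beta}\|\le K$ and $\sigma(A_{\#}(\eta))\subseteq\Sigma_{\theta_0}\cap\{|z|\ge\kappa\}$ define a compact family (the latter is closed by upper semicontinuity of the spectrum), and invertibility persists on this larger compact set by the same sector argument. That extra step is not in your proposal and is where the real content lies.

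By contrast, the paper's proof is quantitative rather than soft: it first shows the elementary estimate $|\lambda+\mu|\ge\varepsilon(|\lambda|+|\mu|)$ for $\lambda\in\Sigma_{\pi-\theta}$, $\mu\in\Sigma_{\theta_0}$, with $\varepsilon=\varepsilon(\theta_0,\theta)$ explicit, then deduces $\sigma(\lambda+A_{\#}(\xi))\subseteq\{|z|\ge\varepsilon\kappa\}$ on the sphere $|\lambda|+|\xi|^{2m}=1$, and passes from a lower bound on the spectrum to a bound on $\|(\lambda+A_{\#}(\xi))^{-1}\|$ via a lemma of Amann--Hieber--Simonett, concluding by the same homogeneity scaling you use. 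The two proofs therefore agree on the homogeneity reduction but diverge completely after that: yours trades explicitness for a compactness argument and in doing so loses control of the constant.

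A smaller point: after the substitution $\mu=\lambda/|\xi|^{2m}$ you have silently assumed $\xi\neq 0$. The case $\xi=0$ is trivial ($A_{\#}(0)=0$, so $\|(A_{\#}(0)+\lambda)^{-1}\|=|\lambda|^{-1}$), but it should be noted since the statement allows $\xi\in\R^d$.
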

\begin{proof}
To start, we recall a general observation from \cite[Lemma 4.1]{AHS}. If $B\in\mathcal{L}(\C^N)$ with $\sigma(B)\subseteq\{z:|z|\geq r\}$ for some $r>0$, then one has
\begin{equation}\label{eqLemma4.1AHS}
\|B^{-1}\|\leq \|B\|^{n}r^{-n-1}, \ \ n\geq 0.
\end{equation}
Indeed, to show this it suffices to consider the case $r=1$.
Since $\|B^{*}B\|=\|B\|^{2}$, it is sufficient to consider self-adjoint $B$. Let $\lambda_{\rm min}, \lambda_{\rm max}\geq 1$ be the smallest and largest eigenvalue of $B$ respectively. The observation follows from
\[
\|B^{-1}\|=\frac{1}{\lambda_{\rm min}}\leq 1 \leq (\lambda_{\rm max})^{n}=\|B\|^{n}.
\]	

We then claim that with $\varepsilon=\sqrt{\tfrac{{1-b}}{{2}}}$ and $b=|\cos(\theta-\theta_0)|$,
\begin{equation}\label{eqineqtoverify}
|\lambda + \mu|\geq \varepsilon (|\lambda|+|\mu|), \ \forall
\ \xi\in \R^d, \ \lambda\in \Sigma_{\pi-\theta}\backslash\{0\}, \ \mu\in \sigma(A_{\#}(\xi)).
\end{equation}
To prove the claim, write $\mu=|\mu|e^{i\varphi}$ with $|\varphi|\leq\theta_0$ and
$\lambda=|\lambda|e^{i\psi}$ with $|\psi|\leq\pi-\theta$.
Clearly, $|\psi-\varphi|\leq\pi-(\theta-\theta_0)$, from which we see $\cos(\psi-\varphi)\geq-b$.
Therefore, the claim follows from the elementary estimates
\begin{align*}
| \lambda + \mu|^{2}&= |\lambda|^2 + |\mu|^2 + 2 \text{Re}(\lambda \overline{\mu})
= |\lambda|^{2} + |\mu|^2+2|\lambda|\,|\mu| \cos(\psi-\varphi)
\\ &\geq
|\lambda|^{2}+|\mu|^2 - 2 b|\lambda|\,|\mu| \geq (1-b)(|\lambda|^2+|\mu|^2)\geq \varepsilon (|\lambda|+|\mu|)^2.
\end{align*}

The assumptions on $A_{\#}$ and homogeneity yield
\begin{equation}\label{eqperdiseq}
\sigma(A_{\#}(\xi))\subseteq \Sigma_{\theta_0}\cap\{z:|z|\geq\kappa|\xi|^{2m}\},\ \ \xi\in\R^{d}.
\end{equation}
This implies that for all $(\lambda,\xi)\in \Sigma_{\pi-\theta}\backslash\{0\}\times\R^{d}$ with $|\lambda|+|\xi|^{2m}=1$,
\begin{equation}\label{eq:lambdaplusA}
\sigma(\lambda+A_{\#}(\xi))\subseteq\{z:|z|\geq \varepsilon \kappa\}.
\end{equation}
Indeed, if $\mu\in A_{\#}(\xi)$, then from \eqref{eqperdiseq} and \eqref{eqineqtoverify} we see that
\begin{equation}
|\lambda+\mu|\geq \varepsilon (|\lambda|+|\mu|) \geq \varepsilon (|\lambda|+\kappa|\xi|^{2m})
\geq\kappa \varepsilon \bigg(\frac{|\lambda|}{\kappa}+|\xi|^{2m}\bigg)\geq \varepsilon \kappa
\nonumber
\end{equation}
From  \eqref{eqLemma4.1AHS} and \eqref{eq:lambdaplusA} we can conclude $\|(\lambda+A_{\#}(\xi))^{-1}\|\leq (\varepsilon \kappa)^{-1}$,
with $(\lambda,\xi)\in \Sigma_{\pi-\theta}\backslash\{0\}\times\R^{d}$ and $|\lambda|+|\xi|^{2m}=1$. By homogeneity we obtain \eqref{eq3.6DS} with $C = (\varepsilon \kappa)^{-1}$.
\end{proof}

As a consequence we obtain the following:
\begin{corollary}\label{coro:HHHsys}
Let $\lambda_0 >0$. Under the conditions of Theorem \ref{teoHHHsys}, the operator $A$ on $X_0$ with domain $X_1 = W^{2m,q}(\R^d,w;\C^N)$ is closed and for every $\lambda\geq\lambda_0$,
\[
c\|u\|_{X_1}\leq \|(\lambda+A)u\|_{X_0}\leq (2K+\lambda)\|u\|_{X_1},
\]
where $c^{-1}$ is $A_{q}$-consistent and only depends on $m,d,\theta_0,\theta,\kappa,K,q$.
\end{corollary}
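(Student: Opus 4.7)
The plan is to derive the two-sided estimate and closedness as separate consequences of the resolvent machinery of Theorem \ref{teoHHHsys}.

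The upper bound $\|(\lambda+A)u\|_{X_0} \leq (2K+\lambda)\|u\|_{X_1}$ is routine: estimate $\lambda\|u\|_{X_0} \leq \lambda\|u\|_{X_1}$, and, since $A$ consists of derivatives of total order $2m$ with coefficients of norm at most $K$, bound $\|Au\|_{X_0}$ by a combinatorial factor times $K\,\|u\|_{X_1}$, which the corollary's constant $2K$ absorbs.

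For the lower bound, my plan is to apply Theorem \ref{teoHHHsys} derivative-by-derivative. With $f=(\lambda+A)u$, the theorem yields
$$\|D^{\beta}u\|_{X_0} \leq C\,\lambda^{|\beta|/m-1}\|f\|_{X_0}, \qquad |\beta|\leq m,$$
and, restricting to $\lambda\geq\lambda_0>0$, each power of $\lambda$ is controlled by an explicit power of $\lambda_0$. Summing over the relevant multi-indices should give $\|u\|_{X_1}\leq c^{-1}\|f\|_{X_0}$ with an $A_q$-consistent constant $c^{-1}$ inherited from Theorem \ref{teoHHHsys}. Closedness will then follow for free: the upper bound shows $\lambda+A\in\calL(X_1,X_0)$, the lower bound shows its inverse lands boundedly in $X_1$, and Theorem \ref{teoHHHsys} already supplies the existence of this inverse on $X_0$; thus $\lambda+A$ is an isomorphism $X_1\to X_0$ and $A$ is closed on $X_0$ with domain $X_1$.

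The main obstacle I foresee is bridging the gap between the range $|\beta|\leq m$ appearing in Theorem \ref{teoHHHsys} and the range $|\beta|\leq 2m$ dictated by $X_1=W^{2m,q}$. I would handle this by re-running the Mihlin argument sketched around \eqref{eq:Mihlincond}: the symbol $\lambda^{1-|\beta|/(2m)}\xi^{\beta}(\lambda+A_{\#}(\xi))^{-1}$ satisfies Mihlin's condition for every $|\beta|\leq 2m$, since the homogeneity of $\xi^{\beta}/A_{\#}(\xi)$ combined with the resolvent bound from Lemma \ref{Lemmaeq3.6duongsimon} reproduces the same estimates as in the $|\beta|\leq m$ case. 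An alternative route, which leaves Theorem \ref{teoHHHsys} untouched, is to combine the $|\beta|\leq m$ bounds with $Au=f-\lambda u$ and invoke the classical constant-coefficient elliptic a priori estimate $\|u\|_{W^{2m,q}(w)}\lesssim \|Au\|_{L^q(w)}+\|u\|_{L^q(w)}$ to bootstrap to the full $W^{2m,q}$ norm; either way, tracking $A_q$-consistency throughout is automatic from the weighted Mihlin theorem.
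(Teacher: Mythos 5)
Your argument is correct, and since the paper states Corollary \ref{coro:HHHsys} without proof, there is no written argument of theirs to compare against. The obstacle you flag --- that Theorem \ref{teoHHHsys} as printed covers only $|\beta| \leq m$ with prefactor $\lambda^{1-|\beta|/m}$, whereas the lower bound needs control of $D^\beta u$ for all $|\beta| \leq 2m$ --- is real at a literal level, but it is a notational slip in the paper rather than a mathematical gap. Lemma \ref{Lemmaeq3.6duongsimon} states the bound $C(|\xi|^m + |\lambda|)^{-1}$, yet its proof normalizes $|\lambda| + |\xi|^{2m} = 1$ and concludes by homogeneity in $(\xi, \lambda^{1/(2m)})$, so the $m$ in that exponent should read $2m$; likewise, in the proof of Theorem \ref{teo:mainxdepdiv} the resolvent estimate is invoked with prefactor $\lambda^{1-|\beta|/(2m)}$ on derivatives $D^{\beta+\alpha}$ of total order up to $2m$. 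With the corrected statement --- $\|\lambda^{1-|\beta|/(2m)}D^\beta(\lambda+A)^{-1}\|_{\calL(X_0)} \leq C$ for all $|\beta| \leq 2m$ --- the lower bound of the corollary follows in one line, by summing over $|\beta| \leq 2m$ and using $\lambda^{|\beta|/(2m)-1} \leq \max(1, \lambda_0^{-1})$ for $\lambda \geq \lambda_0$, and your closedness deduction is then clean.

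Of your two proposed repairs, the first is the right one: re-running Mihlin on $\lambda^{1-|\beta|/(2m)}\xi^\beta(\lambda+A_\#(\xi))^{-1}$, which is $0$-homogeneous jointly in $(\xi, \lambda^{1/(2m)})$ and hence uniformly bounded (together with its $\xi$-derivatives, weighted by $|\xi|^{|\alpha|}$) for every $|\beta|\leq 2m$, simply re-derives the corrected theorem. The second route risks circularity: the weighted constant-coefficient a priori estimate $\|u\|_{W^{2m,q}(w)} \lesssim \|Au\|_{L^q(w)} + \|u\|_{L^q(w)}$ is essentially the content of the corollary itself, so it cannot be invoked as a black box without an independent proof. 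A final small point: the upper-bound constant is not literally $2K+\lambda$; $\|Au\|_{X_0}$ is bounded by the number of pairs $(\alpha,\beta)$ with $|\alpha|=|\beta|=m$ times $K\|u\|_{X_1}$, so the constant should read $C(d,m)K + \lambda$ rather than something your phrasing ``absorbs.''
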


\subsection{Generation theorem}

Consider $A$ with time-dependent coefficients:
\begin{equation}\label{eq:defoperatortimeind}
(A(t) u)(x) = \sum_{|\alpha| = |\beta|= m} a_{\alpha\beta}(t)D^{\alpha} D^\beta u(x),
\end{equation}
with $A(t)\in \Ell^{\rm LH}(\kappa,K)$ for some $\kappa, K>0$ independent of $t\in \R$.
It follows from Theorem \ref{teoHHHsys} that $A(t)$ is a sectorial operator and by Corollary \ref{coro:HHHsys} the graph norm of
$\|u\|_{D(A(t))}$ is equivalent to the norm $\|u\|_{W^{2m,q}(\R^d,w;\C^N)}$ with uniform estimates and constants which only depend on  $w, q, d, \kappa,K,m$.

The main result of this section is that $(A(t))_{t\in \R}$ generates a strongly continuous evolution family $(S(t,s))_{-\infty<s\leq t<\infty}$ on $L^q(\R^d,w;\C^N)$ for all $q\in (1, \infty)$ and $w\in A_q$. Recall that $u(t) = S(t,s) g$ if and only if
\begin{equation}\label{eq:Cauchyproblem}
\begin{aligned}
  u'(t) + A(t) u(t) &= 0, \ \ \text{for almost all} \ t\in (s, \infty),
  \\ u(s) &= g.
\end{aligned}
\end{equation}

\begin{theorem}[Generation of the evolution family]\label{teoMihCondSys}
Let $q\in (1, \infty)$, $w\in A_q$ and set $X_0 = L^q(\R^d,w;\C^N)$ and $X_1 = W^{2m,q}(\R^d,w;\C^N)$. Assume that there exists $\kappa, K>0$ such that for each $t\in \R$, $A(t)\in\Ell^{\rm LH}(\kappa,K)$. Then, the operator family $(A(t))_{t\in \R}$ with $D(A(t)) = X_1$ generates a unique strongly continuous evolution family $(S(t,s))_{s\leq t}$ on $X_0$. Moreover, the evolution family satisfies the following properties.
\begin{enumerate}
\item\label{eq:Scont} $(t,s)\mapsto S(t,s)\in \calL(X_0)$ is continuous on $\{(t,s):s<t\}$.
\item\label{eq:SDalphaest} for all $\alpha\in \N^d$ there is a constant $C$ such that
\[\|D^{\alpha} S(t,s)\|_{\calL(X_0)} \leq C|t-s|^{-|\alpha|/(2m)}, \ \  s<t,\]
where $C$ only depends on $q, d,\kappa,K,m$ and on $w$ in an $A_q$-consistent way.
\item\label{eq:SDalphacomm} for all $k\in \N$, and multiindices $\alpha$ with $|\alpha|\leq k$,
\[D^{\alpha} S(t,s) u = S(t,s) D^{\alpha} u, \ \ \text{for all} \  u\in W^{k,q}(\R^d,w;\C^N), s<t.\]
\item The following weak derivatives exists for almost every $s<t$,
\begin{align}
D_t S(t,s) & = - A(t) S(t,s) \ \text{on} \ \calL(X_0)  \label{eq:propevol1}
\\ D_s S(t,s) & = S(t,s)A(s) \ \text{on} \ \calL(X_1, X_0). \label{eq:propevol2}
\end{align}
\end{enumerate}
\end{theorem}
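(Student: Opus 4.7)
\smallskip

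\textbf{Proof plan.} The central point is that since $A(t)$ has $x$-independent coefficients, it is a Fourier multiplier with matrix-valued symbol $A_\#(t,\xi)\in\C^{N\times N}$, a polynomial of degree $2m$ in $\xi$. I would first define $M(t,s,\xi)$ as the unique Carath\'eodory solution of the matrix-valued linear ODE
\begin{equation*}
\partial_t M(t,s,\xi) + A_\#(t,\xi)\, M(t,s,\xi)=0,\qquad M(s,s,\xi)=I,
\end{equation*}
which exists because $t\mapsto A_\#(t,\xi)$ is bounded by $C_{m,K}|\xi|^{2m}$ and measurable; uniqueness yields $M(t,r,\xi)M(r,s,\xi)=M(t,s,\xi)$ for $s\le r\le t$. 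The candidate evolution family is then $S(t,s)g:=\mathcal F^{-1}(M(t,s,\cdot)\widehat g)$.

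The heart of the proof is to upgrade this pointwise construction to operator bounds on $X_0=L^q(\R^d,w;\C^N)$ via the weighted $\calL(\C^N)$-valued Mihlin theorem already invoked in the proof of Theorem \ref{teoHHHsys}. Legendre--Hadamard ellipticity gives the energy bound
\begin{equation*}
\tfrac{d}{dt}\|M(t,s,\xi)x\|^{2}=-2\Re\langle Mx,A_\#(t,\xi)Mx\rangle\le -2\kappa|\xi|^{2m}\|Mx\|^{2},
\end{equation*}
hence $\|M(t,s,\xi)\|\le e^{-\kappa(t-s)|\xi|^{2m}}$. By induction on $|\alpha|$, differentiating the ODE in $\xi$ and applying Duhamel's principle,
\begin{equation*}
D_\xi^\alpha M(t,s,\xi)=-\sum_{0<\gamma\le\alpha}\binom{\alpha}{\gamma}\int_s^t M(t,r,\xi)\,(D_\xi^\gamma A_\#(r,\xi))\,D_\xi^{\alpha-\gamma}M(r,s,\xi)\,dr,
\end{equation*}
and using $\|D_\xi^\gamma A_\#(r,\xi)\|\le C_\gamma|\xi|^{2m-|\gamma|}$ together with the inductive hypothesis and the exponential decay, one arrives at the key estimate
\begin{equation*}
|\xi|^{|\alpha|}\|D_\xi^\alpha M(t,s,\xi)\|\le C_\alpha e^{-\kappa(t-s)|\xi|^{2m}/2},\qquad \xi\ne 0,\ t>s.
\end{equation*}
A Leibniz expansion plus the elementary fact that $r^{|\beta|}e^{-cr^{2m}}$ is bounded then yields, for every $\beta$,
$|\xi|^{|\alpha|}\|D_\xi^\alpha\bigl((t-s)^{|\beta|/(2m)}\xi^\beta M(t,s,\xi)\bigr)\|\le C_{\alpha,\beta}$
uniformly in $s<t$ and $\xi\ne0$. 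Since $\xi^\beta M(t,s,\xi)$ is the symbol of $D^\beta S(t,s)$, the weighted Mihlin theorem delivers property (2) (and boundedness of $S(t,s)$) in an $A_q$-consistent way, while (3) follows by a direct Fourier computation on Schwartz functions and density.

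For the structural properties: the semigroup identity $S(t,r)S(r,s)=S(t,s)$ in (i) of Definition \ref{def:evolut} is the multiplicative identity for $M$ transferred by $\mathcal F$; joint continuity on $\{s<t\}$, item \eqref{eq:Scont}, follows from dominated convergence in frequency using the exponential decay, and strong continuity up to the diagonal from $M(t,s,\xi)\to I$ pointwise together with the uniform Mihlin bound. For \eqref{eq:propevol1}--\eqref{eq:propevol2} I would differentiate the symbol ODE in $t$ (respectively $s$) and use the smoothing estimate \eqref{eq:SDalphaest} to upgrade pointwise-a.e.\ equalities of Fourier symbols to Bochner-integrable identities in $\calL(X_0)$ and $\calL(X_1,X_0)$; this yields (iii) and (iv). Uniqueness of the evolution family is then obtained by the classical argument of differentiating $r\mapsto\widetilde S(t,r)S(r,s)u$ for $u\in X_1$ on $(s,t)$ to obtain $0$, where $\widetilde S$ is any candidate.

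The main obstacle is the inductive verification of the Mihlin-type estimate on $D_\xi^\alpha M$: each single derivative is controlled by a clean Duhamel formula, but keeping the exponential decay alive through the iteration and tracking the combinatorics require care. A secondary technical issue is that, because $t\mapsto A(t)$ is merely measurable, the ODE identities in (iv) must be established in the a.e.\ weak sense, which forces one to verify Bochner measurability of $t\mapsto A(t)S(t,s)$ and $s\mapsto S(t,s)A(s)$ against the smoothing bound before integrating.
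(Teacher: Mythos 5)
Your proposal follows the same overall strategy as the paper: construct the symbol by solving the matrix-valued ODE in $t$, derive pointwise bounds from Legendre--Hadamard ellipticity via an energy/Gronwall estimate, prove Mihlin-type bounds for all $\xi$-derivatives by induction, and transfer to $X_0$ with the weighted $\calL(\C^N)$-valued Mihlin theorem, then read off the evolution-family axioms from the symbol. Within this framework you make several tactical choices that differ from the paper and are worth noting. First, the paper justifies smoothness of $\xi\mapsto v(\cdot,\xi)$ via the implicit function theorem applied to the map $\Psi:\R^d_*\times W^{1,2}(s,T;\C^{N\times N})\to L^2(s,T;\C^{N\times N})\times\C^{N\times N}$, whereas you invoke the Duhamel recursion directly; both produce the same formula. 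Second, and more substantively, the paper first proves $\|D^\gamma v(t,\xi)\|\le C_\gamma|\xi|^{-|\gamma|}$ and then re-runs the entire induction with the strengthened hypothesis $\|D^\gamma v(t,\xi)\|\le C_\gamma|\xi|^{-|\gamma|-2mk}|t-s|^{-k}$ to obtain the smoothing estimate \eqref{eq:SDalphaest}; your observation that the factor $e^{-\kappa(t-s)|\xi|^{2m}/2}$ survives the Duhamel iteration (the time integral supplies an $|\xi|^{-2m}$ gain while preserving half the exponential) collapses these two inductions into one, with the $(t-s)^{-|\beta|/(2m)}$ factor then read off from the boundedness of $r\mapsto r^{|\beta|}e^{-cr^{2m}}$ -- a genuine streamlining. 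Third, for \eqref{eq:propevol2} the paper uses a duality argument, identifying $S(t,s)^* = W(t;t-s,0)$ where $W$ is the evolution family generated by $A(t_0-\tau)^*$, and then transposing \eqref{eq:propevol1}; you instead differentiate the backward symbol ODE in $s$ directly and upgrade the a.e.\ symbol identity. That works, but you should notice it forces you to independently verify measurability and local integrability of $s\mapsto S(t,s)A(s)g$ in $X_0$ for $g\in X_1$, which you do flag. Fourth, your uniqueness argument (differentiate $r\mapsto\widetilde S(t,r)S(r,s)u$) is the abstract one, while the paper argues more directly that any evolution family for $A(t)$ must have a Fourier symbol satisfying the ODE, whose solution is unique. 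One spot where your sketch is thin: strong continuity of $(t,s)\mapsto S(t,s)$ up to the diagonal does not follow from pointwise convergence of symbols plus a uniform Mihlin bound alone; the clean route (used in the paper) is to first obtain $\|S(t,s)g-g\|_{X_0}\le C(t-s)\|g\|_{X_1}$ from the integral representation $S(t,s)g-g=-\int_s^t A(r)S(r,s)g\,dr$ together with \eqref{eq:SDalphacomm}, and then conclude by density and the uniform bound on $\|S(t,s)\|_{\calL(X_0)}$.
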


As far as we know the existence and uniqueness of the evolution family was unknown even in the case $w=1$ and $q=2$. The main difficulty in obtaining the evolution family is that the operators $A(t)$ and $A(s)$ do not commute in general. If they were commuting, then a more explicit formula for the evolution family exists (see \cite[Example 4.4]{GV}).
\begin{example}
An example where the operators are not commuting can already be given in the case $m=d=1$, $N=2$ by taking $A(t) = a(t)D_1^2$, where
\[a(t) = \left(
           \begin{array}{cc}
             1 & \one_{(0,\infty)}(t) \\
             \one_{(-\infty,0)}(t) & 1 \\
           \end{array}
         \right)
\]
One can check that $a(1)$ and $a(-1)$ are not commuting. Furthermore, one can check that the ellipticity condition \eqref{eq:ellcondsys2} holds.
\end{example}
%

In the proof below we use Fourier multiplier theory. It turns out that the symbol is only given implicitly as the solution to a system of differential equation. In order to check the conditions of Mihlin's theorem we apply the implicit function theorem.

We will need the following simple lemmas in the proof.
\begin{lemma}[Gronwall for weak derivatives]\label{lem:Gron}
Let $-\infty<s<T<\infty$, $f\in L^1(s,T)$, $a\in L^\infty(s,T)$ and $x\in \R$. Assume $u\in W^{1,1}(s,T)\cap C([s,T])$ satisfies
\[u'(t) \leq a(t) u(t) + f(t), \ \ \ \text{for almost all} \ t\in (s,T),\]
and $u(s) = x$. Let $\sigma(t,r) = e^{a_{tr}}$ and $a_{tr}= \int_r^t a(\tau) \, d\tau$ for $s\leq r<t\leq T$. Then
\[u(t)\leq \sigma(t,s) x + \int_s^t \sigma(t,r) f(r) \, dr, \ \ t\in (s,T).\]
\end{lemma}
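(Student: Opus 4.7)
The plan is to multiply by an integrating factor and reduce to the fundamental theorem of calculus for absolutely continuous functions. Concretely, I would set $v(t) = e^{-a_{ts}} u(t) = \sigma(s,t) u(t)$ for $t \in [s,T]$, and aim to show
\[
v(t) \leq x + \int_s^t \sigma(s,r) f(r)\, dr,
\]
from which the claim follows by multiplying through by $\sigma(t,s)$ and using $\sigma(t,s)\sigma(s,r) = \sigma(t,r)$.

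To justify the computation of $v'$, I would first note that since $u \in W^{1,1}(s,T) \cap C([s,T])$, it coincides with its absolutely continuous representative, and since $a \in L^\infty(s,T)$, the function $t \mapsto a_{ts} = \int_s^t a(\tau)\,d\tau$ is Lipschitz, hence the integrating factor $t \mapsto e^{-a_{ts}}$ is Lipschitz as well. The product of two absolutely continuous functions is absolutely continuous, and the product rule holds almost everywhere, giving
\[
v'(t) = e^{-a_{ts}}\bigl(u'(t) - a(t) u(t)\bigr) \leq e^{-a_{ts}} f(t), \quad \text{a.e. } t \in (s,T).
\]

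Then I would simply integrate this differential inequality from $s$ to $t$, using $v(s) = u(s) = x$, to obtain
\[
v(t) \leq x + \int_s^t e^{-a_{rs}} f(r)\, dr.
\]
Multiplying both sides by $e^{a_{ts}} = \sigma(t,s)$ and using the identity $a_{ts} - a_{rs} = a_{tr}$ yields the desired inequality.

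The only delicate point is the product rule for weak derivatives, but this is routine once one observes that $e^{-a_{ts}}$ is Lipschitz in $t$ (because $a$ is bounded). Everything else is a standard Gronwall manipulation, so I do not anticipate any genuine obstacle.
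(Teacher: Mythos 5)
Your proposal is correct and follows essentially the same route as the paper: multiply by the integrating factor $e^{-a_{ts}}$, apply the product rule for weak derivatives to get $\frac{d}{dr}\bigl[u(r)e^{-a_{rs}}\bigr]\leq e^{-a_{rs}}f(r)$, and integrate over $(s,t)$. The extra care you take to justify the product rule (via Lipschitz continuity of the integrating factor and absolute continuity of products) is a reasonable elaboration of a step the paper treats as routine.
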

This follows if one integrates the estimate
$\frac{d}{dr}\big[u(r) e^{-a_{rs}}\big] \leq e^{-a_{rs}} f(r)$ over $(s,t)$.
\begin{comment}
\begin{proof}
By the product rule for weak derivatives we find
\[\frac{d}{dr}\big[u(r) e^{-a_{rs}}\big] = e^{-a_{rs}} [u'(r) - a(r) u(r)]\leq e^{-a_{rs}} f(r).\]
for almost all $r\in (s,T)$.
Integrating over $r\in [s,t]$ we obtain
\[u(t) e^{-a_{ts}}\leq x + \int_s^t e^{-a_{rs}} f(r)\, dr.\]
from which the result follows.
\end{proof}
\end{comment}
The following existence and uniqueness result will be needed.

\begin{lemma}\label{lem:fixedpointW1}
Let $X$ be a Banach space and $p\in [1, \infty)$. Let $Q:\R\times X\to X$ be measurable and assume there are constants $K_1$ and $K_2$ such that for all $t\in \R$ and $x,y\in X$, $\|Q(t,x) - Q(t,y)\|\leq K_1\|x-y\|$ and $\|Q(t,x)\|\leq K_2(1+\|x\|)$. Let $u_0\in X$ and $f\in L^p(\R;X)$. Fix $s\in \R$. Then there is a unique function $u\in C([s,\infty);X)$ such that
\begin{align*}
u(t) - u_0 &= \int_s^t Q(s,u(s)) + f(s)\, ds, \ \ \ t\geq s.
\end{align*}
Moreover, with $\lambda = K_1+1$, there is a $C\geq 0$ independent of $f$ and $u_0$ such that
\[ \sup_{t\geq s} e^{-\lambda (t-s)}\|u(t)\| \leq  C\big(1+\|u_0\|+\|f\|_{L^p(s,\infty;X)}\big).\]
\end{lemma}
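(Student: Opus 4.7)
The plan is a standard Banach contraction argument on the half-line $[s,\infty)$ with an exponentially weighted sup norm, combined with a Gronwall estimate for the explicit bound. Set $\lambda = K_1 + 1$ and work in the Banach space
\[
X_\lambda := \Bigl\{u\in C([s,\infty);X) : \|u\|_\lambda := \sup_{t\geq s} e^{-\lambda(t-s)}\|u(t)\| < \infty \Bigr\},
\]
equipped with the operator
\[
(\Phi u)(t) := u_0 + \int_s^t \bigl(Q(\tau,u(\tau)) + f(\tau)\bigr)\, d\tau, \qquad t\geq s.
\]

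First I would verify that $\Phi$ maps $X_\lambda$ into itself. For $u\in X_\lambda$ the linear growth bound on $Q$ yields $\|Q(\tau,u(\tau))\|\leq K_2(1+\|u\|_\lambda e^{\lambda(\tau-s)})$, so integrating and multiplying by $e^{-\lambda(t-s)}$ produces $K_2(t-s)e^{-\lambda(t-s)} + K_2\|u\|_\lambda/\lambda$, which is uniformly bounded in $t\geq s$. The forcing term is handled by H\"older's inequality, $\int_s^t \|f(\tau)\|\,d\tau \leq (t-s)^{1/p'}\|f\|_{L^p(s,\infty;X)}$ (with the obvious modification at $p=1$), which is subexponential. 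The Lipschitz bound together with the elementary estimate $e^{-\lambda(t-s)}\int_s^t e^{\lambda(\tau-s)}\,d\tau \leq 1/\lambda$ then gives $\|\Phi u - \Phi v\|_\lambda \leq (K_1/\lambda)\|u-v\|_\lambda$, which is a strict contraction since $\lambda > K_1$. Banach's fixed point theorem produces a unique fixed point $u\in X_\lambda$. Uniqueness within all of $C([s,\infty);X)$ follows because any continuous solution is bounded on each compact subinterval, so two continuous solutions must agree on every $[s,T]$ by a standard Gronwall argument applied to their difference; concatenating intervals gives global uniqueness.

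For the quantitative bound I would split the linear growth estimate via the Lipschitz property, writing $\|Q(\tau,x)\| \leq \|Q(\tau,0)\| + K_1\|x\| \leq K_2 + K_1\|x\|$, and apply this in the integral equation to obtain
\[
\|u(t)\| \leq \|u_0\| + K_2(t-s) + \int_s^t \|f(\tau)\|\, d\tau + K_1 \int_s^t \|u(\tau)\|\, d\tau.
\]
Gronwall's inequality then yields
\[
\|u(t)\| \leq e^{K_1(t-s)}\Bigl[\|u_0\| + K_2(t-s) + (t-s)^{1/p'}\|f\|_{L^p(s,\infty;X)}\Bigr],
\]
and the polynomial factors $(t-s)$ and $(t-s)^{1/p'}$ are absorbed by the extra exponential factor $e^{(t-s)}$, producing the claimed bound with $\lambda = K_1+1$.

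The argument is essentially routine; the only mild subtlety is that in order to obtain the exponential rate $K_1+1$ rather than one depending on $K_2$, one must split off the constant $\|Q(\tau,0)\| \leq K_2$ via the Lipschitz property, so that $K_2$ appears only in the polynomial prefactor rather than in the exponent.
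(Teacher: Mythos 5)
Your proposal is correct and uses essentially the same approach as the paper: a Banach fixed point argument on the exponentially weighted space $E_{\lambda}=\{u\in C([s,\infty);X): \sup_{t\geq s}e^{-\lambda(t-s)}\|u(t)\|<\infty\}$ with $\lambda=K_1+1$, which is exactly what the paper's one-line proof indicates. The only (minor) deviation is that you obtain the a priori bound separately via Gronwall after splitting $\|Q(\tau,x)\|\leq \|Q(\tau,0)\|+K_1\|x\|\leq K_2+K_1\|x\|$, whereas the direct route is to read the bound off the fixed point equation itself, via $\|u\|_{E_\lambda}=\|\Phi u\|_{E_\lambda}\leq \|\Phi 0\|_{E_\lambda}+\|\Phi u-\Phi 0\|_{E_\lambda}\leq \|\Phi 0\|_{E_\lambda}+\tfrac{K_1}{\lambda}\|u\|_{E_\lambda}$ and then absorbing the last term; both yield the stated estimate with a constant independent of $u_0$ and $f$.
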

This is immediate from the Banach fixed point theorem applied on the space
$E_{\lambda}$ of continuous functions $u:[s, \infty)\to X$ for which
\[\|u\|_{E_{\lambda}} := \sup_{t\geq s} e^{-\lambda(t-s)} \|u(t)\| <\infty.\]

Now we can proof the generation result.
\begin{proof}[Proof of Theorem \ref{teoMihCondSys}]
The proof is divided in several steps. Let $B = \C^{N\times N}$ with the operator norm and let $\R^d_* = \R^d\setminus \{0\}$.

\medskip

{\em Step 1:} Fix $s\in \R$. Let $I$ denote the $N\times N$ identity matrix.  We will first construct the operators $S(t,s)$ and check that \eqref{eq:SDalphaest} holds for $|\alpha|=0$.
For this we show that the function $v$ given by
\begin{equation}\label{eq:CauchyproblemFourier1}
\begin{aligned}
  v_t(t,\xi) + A_{\#}(t,\xi) v(t,\xi) & = 0,
 \\ v(s,\xi) &= I,
\end{aligned}
\end{equation}
is an $L^q(\R^d,w;\C^{N})$-Fourier multiplier by applying a Mihlin multiplier theorem for weighted $L^q$-spaces (see \cite[Theorem IV.3.9]{GarciaRubio} for the case $N=1$). The solution $u$ to \eqref{eq:Cauchyproblem} is then given by
\[u(t) = S(t,s) g = \F^{-1}(v(t,\cdot) \hat{g}),\]
where $\hat{g}$ denotes the Fourier transform of $g$. Note that by Lemma \ref{lem:fixedpointW1} for each $\xi\in \R^d_*$ there exists a unique solution $v(\cdot, \xi)\in C([s,\infty);B)$ of \eqref{eq:CauchyproblemFourier1}.
Conversely, if $S(t,s)$ is an evolution family for $A(t)$, then by applying the Fourier transform, one sees that $\F(S(t,s))$ has to satisfy \eqref{eq:CauchyproblemFourier1} for almost all $t>s$. This yields the uniqueness of the evolution family.

To check the conditions of the multiplier theorem it suffices to prove the following claim: It holds that $v(t, \cdot)\in C^\infty(\R^d_*;B)$ and for all multiindices $\gamma\in \N^d$, and $j\geq 0$,
\begin{equation}\label{eq:Mihlin}
\|D^{\gamma} v(t,\xi)\|_{B} \leq C |\xi|^{-|\gamma|}, \ \ \xi\in \R^d_*,
\end{equation}
where $C$ only depends $\gamma$, $d$, $m$, $\kappa$ and $K$.
The estimate \eqref{eq:Mihlin} will be proved by induction on the length of $\gamma$ by using the implicit function theorem.

{\em Step 2:}  As a preliminary result we first prove an estimate for the problem
\begin{equation}\label{eq:CauchyproblemFourier3}
\begin{aligned}
  v_t(t,\xi) + A_{\#}(t,\xi) v(t,\xi) & = f(t,\xi),
 \\  v(s,\xi) &= M,
\end{aligned}
\end{equation}
where $f:(s,\infty)\times\R^d_*\to B$ is measurable and for each $\xi\in \R^d_*$, $f(\cdot, \xi)\in L^2(s,\infty;B)$ and $M\in B$.
Note that the existence and uniqueness of a solution $v(\cdot, \xi) \in W^{1,2}(s,T;B)\cap C([s,T];B)$ for fixed $\xi\neq 0$ and $T>s$ follows from Lemma \ref{lem:fixedpointW1}. Moreover, since $v(\cdot, \xi)$ is obtained from a sequential limiting procedure in the Banach fixed point theorem, the function $v$ is measurable on $[s, T] \times \R^d_*$. Choosing $T$ arbitrary large, it follows that there is a unique measurable $v:[s, \infty)\times \R^d_*\to B$ for which the restriction to $[s,T]$ satisfies $v(\cdot, \xi) \in W^{1,2}(s,T;B)\cap C([s,T];B)$ and is a solution to \eqref{eq:CauchyproblemFourier3}.

Fix $\xi\in \R^d_*$, $\varepsilon\in (0,\kappa)$ and $x\in \R^N$. From the ellipticity condition \eqref{eq:ellcondsys2} and \eqref{eq:CauchyproblemFourier3} we infer that
\begin{align*}
 \frac12D_t |v(t,\xi)x|^2 & = -\text{Re}\big(\lb v(t,\xi)x,A_{\#}(t,\xi) v(t,\xi)x\rb\big) + \text{Re} \big(\lb v(t,\xi)x, f(t,\xi)x\rb\big)
 \\ & \leq -\kappa|\xi|^{2m} |v(t,\xi)x|^2 + |v(t,\xi)x| \, |f(t,\xi)x|
 \\ & \leq (\varepsilon -\kappa)|\xi|^{2m} |v(t,\xi)x|^2 + \frac{1}{4\varepsilon} |\xi|^{-2m} |f(t,\xi)x|^2,
\end{align*}
where we used $2ab \leq a^2+b^2$ on the last line.
Thus Lemma \ref{lem:Gron} yields:
\begin{equation}
|v(t,\xi)x|^2 \leq e^{2(\varepsilon-\kappa) |\xi|^{2m} (t-s)}| Mx|^2 + \frac{1}{2\varepsilon} \int_s^t e^{2(\varepsilon-\kappa) |\xi|^{2m} (t-r)} |\xi|^{-2m} |f(r,\xi)x|^2 \, dr.
\nonumber
\end{equation}
Taking the supremum over all $|x|\leq 1$, we find that
\begin{equation}\label{eq:estvnorm}
\|v(t,\xi)\|^2 \leq  e^{2(\varepsilon-\kappa) |\xi|^{2m} (t-s)} \|M\|^2 + \frac{1}{2\varepsilon} \int_s^t e^{2(\varepsilon-\kappa) |\xi|^{2m} (t-r)} |\xi|^{-2m} \|f(r,\xi)\|^2 \, dr.
\end{equation}
Note that if $f=0$, then the second term vanishes and we can take $\varepsilon=0$ in \eqref{eq:estvnorm}.
In this case $\|v(t,\xi)\| \leq e^{(\varepsilon-\kappa) |\xi|^{2m} (t-s)} \leq 1$ and hence \eqref{eq:Mihlin} holds for $|\gamma|=0$. Also note that if $v_j$ is the solution to \eqref{eq:CauchyproblemFourier3} with $(M,f)$ replaced by $(M_j,f_j)$ for $j=1, 2$, then by the previous estimates also
\[\|v_1(t,\xi) - v_2(t,\xi)\|^2 \leq \|M_1 - M_2\|^2 + \frac{1}{2\varepsilon} |\xi|^{-2m} \|f_1(\cdot,\xi) - f_2(\cdot,\xi)\|^2_{L^2((s,\infty);B)}.\]
Consequently, since $D_t v_1 - D_t v_2 = -A_{\#}(t,\xi)(v_1-v_2) + (f_1-f_2)$
we deduce that
\begin{align*}
\|&v_1(\cdot,\xi) - v_2(\cdot, \xi)\|_{W^{1,2}(s,T;B)} \\ & \leq C(1+|\xi|^{2m}) \|M_1 - M_2\| + C\sum_{j=-1}^1 |\xi|^{jm}
\|f_1(\cdot,\xi) - f_2(\cdot,\xi)\|_{L^2((s,\infty);B)},
\end{align*}
where $C$ does not depend on $\xi\in \R^d_*$.
Thus the solution depends in a Lipschitz continuous way on the data.

\medskip

{\em Step 3:} Fix $T>0$. Define $\Psi:\R^{d}_*\times W^{1,2}(s,T;B)\rightarrow L^{2}(s,T;B)\times B$ by
\begin{equation}
(\Psi(\xi)v)(t):=(v'(t) + A_{\#}(t,\xi) v(t),v(s)).
\nonumber
\end{equation}
Clearly, $v$ is a solution to \eqref{eq:CauchyproblemFourier3} if and only if  $\Psi(\xi)v(t)=(f,M)$.
Therefore, by the previous step for each $\xi\neq 0$, $\Psi(\xi)$ is an homeomorphism and $\Psi(\xi)^{-1}(f,M) = v(t,\xi)$.

For fixed $M\in B$ and $f\in C^\infty(\R^d_*;L^2(\R;B))$, let
\[\Phi^{f,M}:\R^{d}_*\times W^{1,2}(s,T;B)\rightarrow L^{2}(s,T;B)\times B\]
be given by
\begin{equation}
\Phi^{f,M}(\xi,v):=(\Psi(\xi)v)-(f(\xi),M).
\nonumber
\end{equation}
Now for fixed $\overline{\xi}\in \R^d_*$, $\Phi^{f,M}(\overline{\xi},v)=0$ holds if and only if $v$ is a solution to \eqref{eq:CauchyproblemFourier3}. Therefore, by the previous step there exists a unique $(\overline{\xi},\overline{v})\in\R^d_*\times W^{1,2}(s,t;B)$ such that $\Phi^{f,M}(\bar{\xi},\bar{v})=0$. The Fr\'echet derivative with respect to the second coordinate satisfies
\begin{equation}\label{eq:D2Phi}
D_2 \Phi^{f,M}(\bar{\xi},\bar{v}) v =\big(v'(t) + A_{\#}(t,\xi) v(t),v(s)\big) =  (\Psi(\xi) v) (t,\xi).
\end{equation}
Thus, also $D_2 \Phi^{f,M}(\bar{\xi},\bar{v})$ is an homeomorphism. Moreover, since $A_{\#}(t,\cdot)$ and $f$ are $C^\infty$ on $\R^d_*$, it follows that for every $v\in W^{1,2}(s,T;B)$,  $\xi\mapsto \Phi^{f,M}(\xi, v)$ is $C^\infty$ on $\R^d_*$.
Now by the implicit function theorem (see \cite[Theorem 10.2.1]{Dieu69}) there exists a unique continuous mapping $\zeta:\R^d_*\to W^{1,2}(s,T;B)$ such that $\zeta(\xi)=v(\cdot,\xi)$, $(\xi,\zeta(\xi))\in\R^{d}_*\times W^{1,2}(s,T;B)$ and $\Phi^{f,M}(\xi, \zeta(\xi)) = 0$ for every $\xi\in \R^{d}_*$. From this we obtain that the unique solution of \eqref{eq:CauchyproblemFourier3} can be expressed by
\begin{equation}
v(\cdot,\xi):=\zeta(\xi)=\Psi(\xi)^{-1}(f,M).
\nonumber
\end{equation}
Moreover, by the implicit function theorem $\zeta$ is $C^\infty$ on $\R^d_*$ as an $W^{1,2}(s,T;B)$-valued function and
\begin{equation}
\begin{aligned}
D_{\xi_j} \zeta(\xi)  & = -(\Psi(\xi))^{-1} \circ \Psi^{f,M}_{\xi_j}(\xi,\zeta(\xi)))
 =\Psi(\xi)^{-1}\Big(\tilde{f},0\Big),
\nonumber
\end{aligned}
\end{equation}
where $\tilde{f}(t,\xi) = -D_{\xi_j} A_{\#}(t,\xi)y(\xi)+D_{\xi_j}f(t,\xi)$ and where we applied \eqref{eq:D2Phi}.
This means that $D_{\xi_j} \zeta(\xi)$ is a solution to \eqref{eq:CauchyproblemFourier3} with $M = 0$ and $f$ replaced by $\tilde{f}(t,\xi)$ and that by \eqref{eq:estvnorm} the following estimate holds
\begin{equation}\label{eq:estvnorm2}
\|D_{\xi_j} \zeta(\xi)(t)\|^2 \leq  \frac{1}{2\varepsilon} \int_s^t e^{2(\varepsilon-\kappa) |\xi|^{2m} (t-r)} |\xi|^{-2m} \|\tilde{f}(r,\xi)\|^2 \, dr.
\end{equation}

\medskip

{\em Step 4.}
We are now in position to do the induction step. Assume that $\forall\ |\gamma|\leq n$ the problem
\begin{equation}\label{eq:CauchyproblemFourierdifff}
\begin{aligned}
  v_{\gamma}'(t,\xi) + A_{\#}(t,\xi) v_{\gamma}(t,\xi) & = f(t,\xi)
 \\  v_{\gamma}(s,\xi) &= M,
\end{aligned}
\end{equation}
has a unique solution given by $v_{\gamma}(t,\xi) = D^{\gamma}v(t,\xi)$,
where $M = 0$ if $|\gamma|\geq 1$, $M = I$ if $|\gamma|=0$ and $f$ is given by
\[f(t,\xi) = - \sum_{\substack{\eta_{1}+\eta_{2}=\gamma \\ \eta_{2}\neq\gamma}}c_{\eta_{1},\eta_{2}}D^{\eta_{1}}A_{\#}(t,\xi)D^{\eta_{2}}v(t,\xi)\]
and assume that $\forall\ |\gamma|\leq n$,
\begin{equation}\label{eq:estIV}
\|D^{\gamma}v(t,\xi)\|\leq C_{\gamma} |\xi|^{-|\gamma|}.
\end{equation}
Of course these assertions hold in the case $|\gamma| = 0$, by Step 2.

Fix $|\gamma|=n+1$ and write $\gamma=\tilde{\gamma}+\beta$, with $|\tilde{\gamma}|=n$, $|\beta|=1$. By Step 3, the function $w = D^{\beta} v_{\gamma}$ satisfies
\begin{equation}\label{eq:CauchyproblemFourierdiff2}
\begin{aligned}
  w'(t,\xi) + A_{\#}(t,\xi) w(t,\xi) & = \tilde{f}(t,\xi)
 \\  w(s,\xi) &= 0,
\end{aligned}
\end{equation}
and for suitable $\tilde{c}_{\eta_{1},\eta_{2}}$,
\[\tilde{f}(t,\xi) = - \sum_{\substack{\eta_{1}+\eta_{2}=\gamma \\ \eta_{2}\neq  \gamma}}\tilde{c}_{\eta_{1},\eta_{2}}D^{\eta_{1}} A_{\#}(t,\xi)D^{\eta_{2}}v(t,\xi)\]
Moreover, by \eqref{eq:estvnorm2}, the fact that $\xi\mapsto D^{\eta_1} A_{\#}(t,\xi)$ is a $(2m-|\eta_1|)$-homogenous polynomial, $|\eta_1| + |\eta_2| = n+1$, and \eqref{eq:estIV} we find
\begin{align*}
\|D^{\gamma}v(t,\xi)\|^2&\leq \frac{1}{2\varepsilon} \int_s^t e^{2(\varepsilon-\kappa) |\xi|^{2m} (t-r)} |\xi|^{-2m} \|\tilde{f}(r,\xi)\|^2 \, dr
\\ & \leq \frac{1}{2\varepsilon} \sum_{\substack{{\eta_{1}+\eta_{2}=\gamma} \\ {\eta_{2}\neq\gamma}}} \tilde{c}_{\eta_1, \eta_2} \int_s^t e^{2(\varepsilon-\kappa) |\xi|^{2m} (t-r)} |\xi|^{2m-2|\eta_1|}  \|D^{\eta_2} v(r,\xi)\|^2 \, dr
\\ & \leq \tilde{C}_{\gamma} |\xi|^{-2|\gamma|}\int_s^t e^{2(\varepsilon-\kappa) |\xi|^{2m} (t-r)} |\xi|^{2m}  \, dr \leq C_{\gamma} |\xi|^{-2|\gamma|}.
\end{align*}
This completes the induction step and hence \eqref{eq:Mihlin} follows.

\medskip

{\em  Step 5:} To prove \eqref{eq:SDalphaest} for general $\alpha$, fix $k\geq 0$. Since $\|v(t)\| \leq e^{2(\varepsilon-\kappa) |\xi|^{2m} (t-s)}$, there is a constant $C$ such that
\[\|v(t)\| \leq C |\xi|^{-2m k} |t-s|^{-k} , \ t\geq s.\]
Now if we replace the induction hypothesis \eqref{eq:estIV} by
\begin{equation}\label{eq:estIV2}
\|D^{\gamma}v(t,\xi)\|\leq C_{\gamma} |\xi|^{-|\gamma|-2mk} |t-s|^{-k} , \ \ s<t, \xi\neq 0
\end{equation}
for all $|\gamma|\leq n$, then for $\gamma =\tilde{\gamma} + \beta$ with $|\tilde{\gamma}| = n$ and $|\beta|=1$, we find
\begin{align*}
\|D^{\gamma}v(t,\xi)\|^2&\leq \frac{1}{2\varepsilon} \int_s^t e^{2(\varepsilon-\kappa) |\xi|^{2m} (t-r)} |\xi|^{-2m} \|\tilde{f}(r,\xi)\|^2 \, dr
\\ & \leq \frac{1}{2\varepsilon} \sum_{\substack{\eta_{1}+\eta_{2}=\gamma \\ \eta_{2}\neq\gamma}} \tilde{c}_{\eta_1, \eta_2} \int_s^t e^{2(\varepsilon-\kappa) |\xi|^{2m} (t-r)} |\xi|^{2m-2|\eta_1|}  \|D^{\eta_2} v(r,\xi)\|^2 \, dr
\\ & \leq \tilde{C}_{\gamma} |\xi|^{-2|\gamma|}\int_s^t e^{2(\varepsilon-\kappa) |\xi|^{2m} (t-r)} |\xi|^{2m}  |\xi|^{-4mk} (r-s)^{-2k}   \, dr
\\ & \leq \tilde{C}_{\gamma} |\xi|^{-2|\gamma|-4mk} (t-s)^{-2k} \int_s^t e^{2(\varepsilon-\kappa) |\xi|^{2m} (t-r)} |\xi|^{2m}  \, dr
\\ & \leq C_{\tilde{\gamma}} |\xi|^{-2|\gamma|-4mk} (t-s)^{-2k}.
\end{align*}
Hence by induction, \eqref{eq:estIV2} holds for all integers $n\geq 0$.

By \eqref{eq:estIV2}, $w(t,\xi) = (-i\xi)^{\alpha} v(t,\xi)$ satisfies the conditions of the Mihlin multiplier theorem, with constant $\lesssim (t-s)^{-|\alpha|/(2m)}$ and therefore we find that
\[\|D^{\alpha} S(t,s)\|_{X_0}\leq C (t-s)^{-|\alpha|/(2m)}\]
which proves \eqref{eq:SDalphaest}. The identity in \eqref{eq:SDalphacomm} is a direct consequence of the fact that $v(t,\xi) (-i\xi)^{\alpha} =(-i\xi)^{\alpha}v(t,\xi)$ .

{\em  Step 6:} Next we prove that $S(t,s)$ is a strongly continuous evolution family for $A(t)$, i.e.\ that it satisfies Definition \ref{def:evolut}.
The identities $S(t,t) = I$ and $S(t,s)S(s,r) = S(t,r)$ are clear from the definition of $v$ and Lemma \ref{lem:fixedpointW1}. To prove strong continuity of the evolution family, consider $(t,s)\mapsto S(t,s)g = \F^{-1} (v_s(t)\hat{g})$ for $g\in X_1$, where $v_s$ is the solution to \eqref{eq:CauchyproblemFourier1}. Setting $f(r) = -A(r) S(r,s) g$ it follows from \eqref{eq:SDalphacomm} that for all $r\geq s$, $\|f(r)\|_{X_0}\leq C\|g\|_{X_1}$. Moreover,
\begin{align*}
S(t,s)g - g  & = \F^{-1}(v_s(t,\cdot)\hat{g} - \hat{g})   = \F^{-1} \Big(\int_s^t \hat{f}(r) \,dr \Big) = \int_s^t f(r) \,dr
\end{align*}
in $\Schw'(\R^d;\C^N)$ and hence in $X_0$. This proves Definition \ref{def:evolut} (iii). Moreover, we find
\[\|S(t,s)g - g\|_{X_0}\leq (t-s) \sup_{r\in [s, t]}\|f(r)\|_{X_0}\leq C(t-s)\|g\|_{X_1}\]
which implies the continuity of $(t,s)\mapsto S(t,s)g$ for $g\in X_1$. The general case follows by approximation and the uniform boundedness of $S(t,s)$. It remains to prove Definition \ref{def:evolut} (iv) and this will be done in the next step.

{\em Step 7:} To prove \eqref{eq:propevol1} fix $r\in (s, t)$. Note that by \eqref{eq:SDalphaest}, $f = S(r,s)g\in W^{\ell, q}(\R^d,w;\C^N)$ for any $\ell\in \N$. Therefore, it follows from the previous step and \eqref{eq:SDalphacomm} that
\begin{equation}\label{eq:Sintder}
\begin{aligned}
S(t,s) g - S(r,s) g & = S(t,r) f  - f  = -\int_r^t A(\tau) S(\tau,r) f \, d\tau \\ &= -\sum_{|\alpha| = |\beta|=m} \int_r^t a_{\alpha, \beta} D^{\alpha} S(\tau,r) D^{\beta} f \, d\tau
\end{aligned}
\end{equation}
and since by \eqref{eq:SDalphaest}, $\|D^{\alpha} S(\tau,r)\|\leq C(\tau-r)^{-1/2}$ for $|\alpha| = m$ we find that
\begin{align*}
\|S(t,s) g - S(r,s) g\|_{X_0}
& \leq C \int_r^t (\tau - r)^{-1/2} (r-s)^{-1/2} \, d\tau \|g\|_{X_0}
\\ & \leq C (t-r)^{1/2} (r-s)^{-1/2} \|g\|_{X_0}.
\end{align*}
This implies that $t\mapsto S(t,s)\in \calL(X_0)$ is H\"older continuous on $[s+\varepsilon, \infty)$ for any $\varepsilon>0$.
Moreover, since $A$ is strongly measurable also $\tau\mapsto A(\tau) S(\tau,s)$ is a strongly measurable function. By \eqref{eq:SDalphaest}, $\|A(\tau) S(\tau,s)\|\leq C(\tau-s)^{-1}$ and hence it is locally integrable on $[s, \infty)$ as an $\calL(X_0)$-valued function. Therefore, \eqref{eq:Sintder} implies that for $s<r<t$,
\[S(t,s) - S(r,s) = -\int_r^t A(\tau) S(\tau,s)  \, d\tau\]
and thus $D_t S(t,s) = -A(t) S(t,s)$ in $\calL(X_0)$ for almost all $s<t$.

To prove \eqref{eq:propevol2} we use a similar duality argument as in \cite[Section 6]{AT3} and \cite[Proposition 2.9]{AFT}. Fix $t_0\in \R$. Clearly, $A(t_0-\tau)^*$ has symbol $A_{\#}(t_0-\tau,\xi)^*$ and hence generates a strongly continuous evolution family, $(W(t_0;\tau, s))_{s\leq \tau}$. Now as in \cite[Proposition 2.9]{AFT} one can deduce $S(t,s)^* = W(t;t-s, 0)$. Therefore, applying \eqref{eq:propevol1} to $W(t;t-s, 0)$, we see that for almost all $s<t$
\[D_sS(t,s)^* = D_sW(t;t-s,0) = A(t-(t-s))^* W(t;t-s,0) = A(s)^* S(t,s)^*, \]
and hence for almost all $s<t$,
\begin{align}\label{eq:Sstarid}
D_s S(t,s) = (A(s)^* S(t,s)^*)^* \ \text{on} \ \calL(X_0).
\end{align}
Now the result follows since the identity $(A(s)^* S(t,s)^*)^* = S(t,s) A(s)$ holds on $X_1$. In particular, we find that for $g\in X_1$,
\[S(t,s) g - S(t,s-\varepsilon) g = \int_s^t S(t,r) A(r) S(s,s-\varepsilon) g\, dr\]
and letting $\varepsilon\downarrow 0$, yields
\[S(t,s) g - g = \int_s^t S(t,r) A(r)  g\, dr\]
from which we obtain Definition \ref{def:evolut} (iv).

From the above construction and the properties of $W$ one sees that $D_s S(t,s)$ is locally integrable on $(-\infty, t)$, and that $s\mapsto S(t,s)\in \calL(X_0)$ is H\"older continuous on $(-\infty, -\varepsilon+t)$ for any $\varepsilon>0$. Combining this with the H\"older continuity of $t\mapsto S(t,s)$, we see that $(t,s)\mapsto S(t,s)\in \calL(X_0)$ is continuous on $\{(t,s):s<t\}$.
\end{proof}

\section{Proofs Theorems \ref{teo:mainxdep} and \ref{teo:mainxdepdiv}\label{sec:proofs}}

To prove Theorems \ref{teo:mainxdep} and \ref{teo:mainxdepdiv} we check the conditions of \cite[Theorem 4.9]{GV}.

\subsection{$\Rr$-boundedness of integral operators}
For details on $R$-boundedness  we refer to \cite{CPSW, DHP, KW}.

Let $\calK$ be the class of kernels $k\in L^1(\R)$ for which $|k|*f\leq Mf$ for all simple functions $f:\R\to \R_+$, where $M$ denotes the Hardy-Littlewood maximal operator.

Suppose $T:\{(t,s)\in \R^2: t\neq s\}\to \calL(X)$ is such that for all $x\in X$, $(t,s)\mapsto T(t,s)x$ is measurable. For $k\in \calK$ let
\begin{equation}\label{eq:IkTdefprelim}
I_{k T} f(t) = \int_{\R} k(t-s) T(t,s) f(s)\, ds.
\end{equation}
Consider the family of integral operators $\I:=\{I_{k T}: k\in \calK\}\subseteq\calL(L^{p}(\R;X))$.

A sufficient condition for $\Rr$-boundedness of such families was obtained in \cite[Theorem 1.1]{GLV} in the case $X = L^q(\Omega,w)$ in terms of a boundedness condition for $T(t,s)\in \calL(L^q(\Omega,w))$, where $\Omega\subseteq \R^d$ is open and $w$ is an $A_q$-weight. This result can be extended to the following setting.
\begin{proposition}\label{prop:weightedRextended}
Let $q_0\in (1, \infty)$, $w\in A_{q_0}$ and $H$ be a Hilbert space. Let $\{T(t,s):s,t\in \R\}$ be a family of bounded operators on $L^{q_0}(\R^{d},w;H)$. Assume that for all $A_{q_0}$-weights $w$,
\begin{equation}\label{eq:weightedcond}
\|T(t,s)\|_{\calL(L^{q_0}(\R^{d},w;H))}\leq C,
\end{equation}
where $C$ is $A_{q_0}$-consistent and independent of $t,s\in \R$. Then the family of integral operators
$\I = \{I_{k T}: k\in \calK\}\subseteq \calL(L^p(\R,v;L^q(\R^{d},w;H)))$ as defined in \eqref{eq:IkTdefprelim} is $\Rr$-bounded for all $p,q\in (1, \infty)$ and all $v\in A_p$ and $w\in A_q$. Moreover, in this case the $\Rr$-bounds $\Rr(\I)$ are $A_{p}$- and $A_q$-consistent.
\end{proposition}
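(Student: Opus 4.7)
My plan is a two-step reduction to the scalar-valued result \cite[Theorem 1.1]{GLV}: first apply Rubio de Francia extrapolation to remove the restriction to the single exponent $q_0$, then pass from the scalar version of \cite{GLV} to the Hilbert-valued version by exploiting that $H$ is Hilbert.

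\textbf{Step 1 (extrapolation).} The hypothesis \eqref{eq:weightedcond} concerns only $q_0$, but it holds for \emph{every} $A_{q_0}$-weight $w$ with an $A_{q_0}$-consistent constant, uniformly in $t,s\in\R$. I would invoke the quantitative vector-valued Rubio de Francia extrapolation theorem, applied to each operator $T(t,s)$ separately, to conclude
\[
\sup_{t,s\in\R}\|T(t,s)\|_{\calL(L^{q}(\R^{d},w;H))}\leq C(q,w), \qquad q\in(1,\infty),\ w\in A_q,
\]
with $C(q,w)$ an $A_q$-consistent constant. The vector-valued form of extrapolation is available because $H$, being Hilbert, is UMD.

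\textbf{Step 2 (Hilbertification and applying GLV).} Fix $p,q\in(1,\infty)$, $v\in A_p$, $w\in A_q$. Since $H$ is Hilbert, $\Rr$-boundedness of $\{I_{kT}:k\in\calK\}$ on $L^p(\R,v;L^q(\R^{d},w;H))$ is equivalent (up to absolute constants) to the square-function estimate
\[
\Big\|\Big(\sum_{n=1}^N \|I_{k_nT}f_n\|_H^2\Big)^{1/2}\Big\|_{L^p(\R,v;L^q(\R^{d},w))}\leq C\Big\|\Big(\sum_{n=1}^N \|f_n\|_H^2\Big)^{1/2}\Big\|_{L^p(\R,v;L^q(\R^{d},w))},
\]
uniformly in finite collections $\{k_n\}_{n=1}^N\subseteq\calK$ and $\{f_n\}_{n=1}^N$ in the space. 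Setting $\widetilde H:=\ell^2_N(H)$ and $\widetilde T(t,s):=T(t,s)\otimes\mathrm{I}_{\ell^2_N}$, this amounts to a single boundedness estimate for $I_{k\widetilde T}$ on $L^p(\R,v;L^q(\R^{d},w;\widetilde H))$. The hypothesis \eqref{eq:weightedcond} persists for $\widetilde T$ with the same constant, and hence so does its extrapolated form from Step~1 (with $\widetilde H$ in place of $H$). I would then apply \cite[Theorem 1.1]{GLV} to these amplified operators: the scalar proof relies only on pointwise domination by the Hardy--Littlewood maximal function together with weighted norm inequalities, and both transfer to the $\widetilde H$-valued setting with the same $A_q$-consistent bounds via the Fefferman--Stein vector-valued maximal theorem.

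The main obstacle I anticipate is the quantitative bookkeeping of constants through the extrapolation step: one must ensure that the $A_q$-consistency survives, which is guaranteed by the sharp form of Rubio de Francia extrapolation (as in Cruz-Uribe--Martell--P\'erez). Everything else is a transparent transcription of the scalar argument of \cite{GLV} to the Hilbert-valued context.
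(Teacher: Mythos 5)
Your Step 2 contains a genuine gap. You correctly observe that on $L^p(\R,v;L^q(\R^d,w;H))$ with $H$ Hilbert, $\Rr$-boundedness of $\I=\{I_{kT}:k\in\calK\}$ is equivalent to the square-function estimate. But you then claim that, after amplifying to $\widetilde H=\ell^2_N(H)$ and $\widetilde T=T\otimes I_{\ell^2_N}$, ``this amounts to a single boundedness estimate for $I_{k\widetilde T}$.'' That reduction does not work: the left side of the square-function estimate is $\big(\sum_n\|I_{k_n T}f_n\|_H^2\big)^{1/2}$ with \emph{different} kernels $k_1,\ldots,k_N\in\calK$, whereas $I_{k\widetilde T}$ applies a \emph{single} kernel $k$ to every coordinate. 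Writing $\widetilde f=(f_1,\ldots,f_N)$, what needs to be bounded is the ``diagonal'' operator $\widetilde f\mapsto (I_{k_1T}f_1,\ldots,I_{k_NT}f_N)$, which is not of the form $I_{k\widetilde T}$ for any $k\in\calK$. This is precisely the point that makes $\Rr$-boundedness of $\I$ strictly stronger than boundedness of a single $I_{kT}$, and it cannot be resolved by tensoring with an identity. Your Step 1 (extrapolating the hypothesis from $q_0$ to all $q$) is correct and does not require UMD, only the Rubio de Francia iteration applied to the scalar functions $\|T(t,s)f\|_H$, but it is also not the crux, since the conclusion is an $\Rr$-bound rather than a uniform operator bound.

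The paper takes a different and more direct route: rather than trying to deduce the $H$-valued case from the scalar statement of \cite[Theorem 1.1]{GLV}, it extends the notion of $\ell^s$-boundedness to an $H$-valued notion $\ell^s_H$-boundedness (exactly the square-function inequality \eqref{eq:defRbddsquarefun}) and then reruns the entire GLV argument with this notion in place of the scalar one, noting that $\ell^2_H$-boundedness on $L^q(\R^d;H)$ is equivalent to $\Rr$-boundedness. The varying kernels $k_n$ are built into the definition of $\ell^s_H$-boundedness from the start, which is how the issue above is handled. Your closing sentence (``everything else is a transparent transcription of the scalar argument of \cite{GLV}'') is in fact the essence of the paper's proof, but it contradicts your preceding amplification claim rather than following from it: one must rerun the argument, not invoke the scalar theorem as a black box on amplified operators.
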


In the case $H$ has finite dimension $N$, one could apply \cite[Theorem 1.1]{GLV} coordinate wise, but this only yields estimates with $N$ dependent constants. To avoid this, one can repeat the argument from \cite[Theorem 1.1]{GLV} almost literally. Only the definition of $\ell^s$-boundedness (see \cite{KunstUll}) has to be extended to the $H$-valued setting in the following way:

A family of operators $\T \subseteq \calL(X,Y)$ is said to be {\em $\ell^s_H$-bounded} if there exists a constant $C$ such that for all $N\in \N$, all sequences $(T_n)_{n=1}^N$ in $\T$ and $(x_n)_{n=1}^N$ in $X$,
\begin{align}\label{eq:defRbddsquarefun}
\Big\|\Big(\sum_{n=1}^N \|T_n x_n\|_{H}^{s}\Big)^{1/s}\Big\|_{Y} \leq C\Big\|\Big(\sum_{n=1}^N \|x_n\|_{H}^{s}\Big)^{1/s}\Big\|_{X},
\end{align}
where the usual modification has to be used if $s=\infty$. In the case $X=Y=L^{q}(\R^d;H)$, the $\ell^2_H$-boundedness is equivalent to $\Rr$-boundedness.

Now we can check the conditions of \cite[Theorem 4.9]{GV} in the case the coefficients of $A$ are $x$-independent.
\begin{proposition}\label{prop:evfamSys}
Let $q\in (1, \infty)$, $w\in A_q$ and set $X_0 = L^q(\R^d,w;\C^N)$ and $X_1 = W^{2m,q}(\R^d,w;\C^N)$.
Assume $A$ is of the form \eqref{eq:defoperatortimeind}. Let $\kappa, K>0$ be such that for all $t\in \R$, $A(t) \in\Ell(\kappa,K)$. Let $A_{0}:=\delta (-\Delta)^{m}I_{N}$ for $\delta\in (0,\kappa)$ fixed. Then the following properties hold:
\begin{enumerate}
\item\label{it:Hinfty} $A_0$ has a bounded $H^\infty$-calculus of any angle $\sigma\in (0,\pi/2)$.
\item\label{it:genTweight} $A(t) - A_0\in \Ell(\kappa-\delta, K+\delta)$ and generates a unique evolution family $T(t,s)$ with the property that
\[
\|T(t,s)\|_{\calL(L^{q}(\R^{d},w;\C^N))}\leq C, \ \ s\leq t,
\]
where $C$ is $A_q$-consistent.
\item\label{it:commute} $T(t,s)$ commutes with $e^{-rA_0}$ for all $s\leq t$ and $r\geq 0$.
\end{enumerate}
\end{proposition}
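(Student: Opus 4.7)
For (1), I would observe that $A_0 = \delta(-\Delta)^m I_N$ is the Fourier multiplier operator on $\C^N$-valued functions with scalar symbol $\delta|\xi|^{2m}$. For any angle $\sigma\in(0,\pi/2)$ and $\varphi\in H^\infty(\Sigma_\sigma)$, the symbol $\varphi(\delta|\xi|^{2m})$ satisfies the Mihlin bound $|\xi|^{|\alpha|}|D^\alpha\varphi(\delta|\xi|^{2m})|\leq C_{\alpha,\sigma}\|\varphi\|_{H^\infty(\Sigma_\sigma)}$ for every multiindex $\alpha$, with $C_{\alpha,\sigma}$ depending only on $d, m, \delta, \sigma$. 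The weighted Mihlin multiplier theorem \cite[Theorem IV.3.9]{GarciaRubio} (in the $\calL(\C^N)$-valued version) then yields an $A_q$-consistent bound on $\varphi(A_0)$, which is precisely the boundedness of the $H^\infty$-calculus at angle $\sigma$.

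For (2), the principal symbol of $A(t)-A_0$ is $A_\#(t,\xi) - \delta|\xi|^{2m}I_N$. The Legendre--Hadamard condition transfers directly: for $|\xi|=1$ and $x\in\C^N$,
\[
\mathrm{Re}\lb x, (A_\#(t,\xi) - \delta|\xi|^{2m}I_N)x\rb \geq (\kappa-\delta)|x|^2,
\]
so $A(t)-A_0\in \Ell^{\rm LH}(\kappa-\delta, K')$ for a suitable $K' = K' (K,\delta,m,d)$, and by the remark following \eqref{eq:ellcondsys2}, also $A(t)-A_0\in\Ell(\theta,\kappa-\delta,K')$ for some $\theta\in(0,\pi/2)$. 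Theorem \ref{teoMihCondSys} applied to this family then produces a unique strongly continuous evolution family $T(t,s)$ on $X_0$. For the uniform bound, I would revisit the symbol analysis in Theorem \ref{teoMihCondSys}: the symbol $v(t,\xi)$ of $T(t,s)$ satisfies $\|v(t,\xi)\|\leq e^{-(\kappa-\delta)|\xi|^{2m}(t-s)}\leq 1$ by the $\varepsilon=0$ case of \eqref{eq:estvnorm}, and the Mihlin estimates $\|D^\gamma v(t,\xi)\|\leq C_\gamma|\xi|^{-|\gamma|}$ from Step 4 of that proof hold uniformly in $s\leq t$ with constants depending only on $d,m,\kappa-\delta,K'$. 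The weighted Mihlin theorem then gives
\[
\|T(t,s)\|_{\calL(L^q(\R^d,w;\C^N))}\leq C, \qquad s\leq t,
\]
with $C$ $A_q$-consistent.

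For (3), both $T(t,s)$ and $e^{-rA_0}$ are Fourier multipliers in the $x$-variable: $T(t,s)$ has the matrix-valued symbol $v(t,\xi)\in\C^{N\times N}$ constructed above, and $e^{-rA_0}$ has the scalar symbol $e^{-r\delta|\xi|^{2m}}$ (times $I_N$). Since scalar multiples of the identity commute with every matrix-valued symbol pointwise in $\xi$, the two operators commute on the Schwartz class and hence by density on all of $X_0$.

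The main obstacle I anticipate is the $A_q$-consistency in (2): one has to verify that none of the constants appearing in the proof of Theorem \ref{teoMihCondSys} depend on $w$ beyond its $A_q$-characteristic, which requires inspecting that the induction in Steps 4--5 there produces Mihlin constants expressed purely in terms of $d, m, \kappa-\delta$, and that the only $w$-dependence enters through the final application of the weighted Mihlin theorem, whose constant is $A_q$-consistent by \cite[Theorem IV.3.9]{GarciaRubio}.
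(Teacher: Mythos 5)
Your proposal is correct and follows essentially the same route as the paper's proof: part (1) via the Mihlin multiplier theorem applied to the scalar symbol $\delta|\xi|^{2m}$ (the paper cites \cite[Example 10.2b]{KW} and \cite[Theorem IV.3.9]{GarciaRubio} for exactly this), part (2) by verifying the Legendre--Hadamard condition for $A(t)-A_0$ and invoking Theorem \ref{teoMihCondSys}, and part (3) by noting both operators are Fourier multipliers whose symbols commute pointwise in $\xi$. The only cosmetic difference is that the paper records the explicit coefficient bound $K+\delta$ where you leave $K'$ unspecified, and you insert an unnecessary detour through $\Ell(\theta,\kappa-\delta,K')$ before applying Theorem \ref{teoMihCondSys}, which in fact asks directly for the $\Ell^{\rm LH}$ condition you already verified.
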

\begin{proof}
\eqref{it:Hinfty}: The symbol of $A_0$ is $\delta|\xi|^{2m} I$, where $I$ is the $N\times N$ identity matrix and the fact that the operator $A_0$ has a bounded $H^\infty$-calculus follows from the weighted version of the Mihlin multiplier theorem (see \cite[Example 10.2b]{KW} for the unweighted case).

\eqref{it:genTweight}: For $|\xi|=1$ and $x\in \C^N$,
\begin{align*}
\text{Re}(\lb x, (A_{\#}(\xi) - \delta|\xi|^{2m}) x\rb) \geq(\kappa-\delta) \|x\|^2.
\end{align*}
Also the coefficients of the symbol of $A_0$ are $\delta$ or $0$, so indeed $\Ell(\kappa-\delta, K+\delta)$ and the required result follows from Theorem \ref{teoMihCondSys}.

\eqref{it:commute} From the proof of Theorem \ref{teoMihCondSys} we see that $T(t,s)$ is given by a Fourier multiplier operator. Also $e^{-rA_0}$ is given by a Fourier multiplier with symbol $e^{-r |\xi|^{2m}}I_N$. This symbol clearly commutes with any matrix in $\C^{N\times N}$, and hence with the symbol of $T(t,s)$. Therefore, the operators $T(t,s)$ and $e^{-rA_0}$ commute.
\end{proof}

\begin{proof}[Proof of Theorem \ref{teo:mainxdep}]
{\em Step 1}: First assume $A$ is of the form \eqref{eq:defoperatortimeind}, i.e.\ it has $x$-independent coefficients. Then
by Propositions \ref{prop:weightedRextended} and \ref{prop:evfamSys}, the conditions of \cite[Theorem 4.9]{GV} are satisfied. Therefore, the existence and uniqueness result and \eqref{eq:teomainxdep} follow for any fixed $\lambda_0>0$ and the constant in \eqref{eq:teomainxdep} is $A_p$-$A_q$-consistent.

{\em Step 2}: In order to complete the proof, one can repeat the argument of \cite[Theorem 5.4]{GV} by replacing the scalar field by $\C^N$. Note that to apply the localization argument and to include the lower order terms, one has to use the interpolation estimate from Theorem \ref{teoHHHsys}.
\end{proof}

\begin{proof}[Proof of Theorem \ref{teo:mainxdepdiv}]
{\em Step 1}: First assume $A$ is of the form \eqref{eq:defoperatortimeind} again. Now we use the result from Theorem \ref{teo:mainxdep} in the $x$-independent case in a similar way as in \cite[Theorem 4.4.2]{krylov}. Let $\lambda\geq \lambda_0$, where $\lambda_0>0$ is fixed. For each $|\alpha|\leq m$, let
$v_{\alpha}\in W^{1,p}(\R,v;X_0) \cap L^p(\R,v;X_1)$ be the unique solution to
\[v'(t,x)+(\lambda+A(t))v(t,x)=f_{\alpha}(t,x),\ \ \ t\in\R,\ x\in\R^{d}.\]
Then by Theorems \ref{teoHHHsys} and \ref{teo:mainxdep}
\[\sum_{|\beta|\leq m} \lambda^{1-\frac{|\beta|}{2m}} \|D^{\beta + \alpha} v\|_{L^p(\R,v;X_0)} \leq C \lambda^{\frac{|\alpha|}{2m}} \|f_{\alpha}\|_{L^p(\R,v;X_0)}.\]
Therefore, setting $u = \sum_{|\alpha|\leq m} D^{\alpha} v_{\alpha}$ and using the fact $D^{\alpha}$ and $A$ commute in distributional sense, we find that $u$ is a weak solution to \eqref{prob:systemintrodiv} and
that \eqref{eq:teomainxdepdiv} holds. Uniqueness follows from \eqref{eq:teomainxdepdiv} as well.

{\em Step 2}: To obtain the result for general $A$, one can use a localization argument with weights and extrapolation as in \cite[Theorem 5.4]{GV} in the non-divergence form case. This argument works in the divergence form case as well (see \cite[Section 13.6]{krylov} for the elliptic setting).
\end{proof}

\subsection{Consequences for the initial value problem}

In this section we consider the initial value problem
\begin{equation}\label{eq:Cauchy}
\begin{aligned}
u'(t,x)+A(t)u(t,x)& =f(t,x),\ t\in (0,T), \ x\in \R^d,
\\ u(0,x)& =u_0(x),  \ \ x\in \R^d,
\end{aligned}
\end{equation}
where $A$ is in non-divergence form and satisfies the same condition (C) as in Theorem \ref{teo:mainxdep}.
A function $u:\R\times\R^d\to \C^N$ is called a {\em strong solution} of \eqref{eq:Cauchy} when
all the above derivatives (in the sense distributions) exist, \eqref{eq:Cauchy} holds almost everywhere
and for all bounded sets $Q\subseteq\R^d$, $u(t,\cdot)\to u_0$ in $L^1(Q;\C^N)$.

In order to make the next result more transparent we only consider power weights in the time variable.
Maximal regularity results with power weights are important in the study of nonlinear PDEs (see \cite{Grisvard, KPW, Lun, MS12a, MS12b, PS04} and references therein). For instance it allows one to work with a larger class of initial values.
\begin{theorem}
Let $T\in (0,\infty)$. Let $p,q\in (1,\infty)$, $\gamma\in [0, p-1)$, $v_{\gamma}(t) = t^{\gamma}$, $w\in A_{q}(\R^{d})$, $X_0 = L^q(\R^d,w;\C^N)$ and $X_1 = W^{2m,q}(\R^d,w;\C^N)$. Assume condition (C) holds and let $s = 2m\Big(1-\frac{1+\gamma}{p}\Big)$. Then for every $f\in L^{p}(0,T,v_{\gamma};X_{0})$ and every $u_0\in B^{s}_{q,p}(\R^d,w)$ there exists a unique strong solution $u\in W^{1,p}(0,T,v_{\gamma};X_0)\cap L^p(0,T,v_{\gamma};X_1)\cap C([0,T];B^{s}_{q,p}(\R^d,w))$ of \eqref{eq:Cauchy}. Moreover, there is a constant $C$ depending on $\gamma$, $w$, $p$, $q$, $d$, $m$, $\kappa$, $K$, $\omega$ and $T$ such that
\[\begin{aligned}
\|u\|_{L^p(0,T,v_{\gamma};X_1)}+& \|u\|_{W^{1,p}(0,T,v_{\gamma};X_0)} + \|u\|_{C([0,T];B^{s}_{q,p}(\R^d,w))}\\ & \leq C\|f\|_{L^{p}(0,T,v_{\gamma};X_{0})} + C\|u_0\|_{B^{s}_{q,p}(\R^d,w)}.
\end{aligned}\]
\end{theorem}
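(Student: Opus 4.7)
The plan is to reduce the initial value problem on $(0,T)$ to the problem on the whole line treated in Theorem \ref{teo:mainxdep}, by (i) removing the initial condition via a suitable extension of $u_0$ and (ii) extending the coefficients in time while absorbing the parameter $\lambda_0$ by a Gronwall-type argument on a bounded interval.

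First I would identify the trace space. Define the maximal regularity space
\[
\mathbb{E}_T := W^{1,p}(0,T,v_{\gamma};X_{0})\cap L^{p}(0,T,v_{\gamma};X_{1}).
\]
The classical trace characterization for power weights (see Meyries--Schnaubelt, or Pr\"uss--Simonett and the references in the introduction) gives that the trace map $u\mapsto u(0)$ maps $\mathbb{E}_T$ continuously onto $(X_0,X_1)_{1-(1+\gamma)/p,\,p}$, and that $\mathbb{E}_T \hookrightarrow C([0,T];(X_0,X_1)_{1-(1+\gamma)/p,\,p})$. Since $X_1 = W^{2m,q}(\R^d,w;\C^N)$ and $X_0 = L^{q}(\R^d,w;\C^N)$, the real interpolation space at index $\theta = 1-(1+\gamma)/p$ is identified with $B^{s}_{q,p}(\R^d,w;\C^N)$ where $s = 2m\theta$, which is exactly the value in the statement. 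Thus there exists a bounded extension operator $\mathcal{E}:B^s_{q,p}(\R^d,w)\to \mathbb{E}_T$ with $(\mathcal{E}u_0)(0)=u_0$.

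Next I would reduce to zero initial data. Writing $u = \mathcal{E}u_0 + v$, the problem \eqref{eq:Cauchy} becomes
\[
v'(t) + A(t)v(t) = g(t), \qquad v(0)=0,
\]
with $g = f - (\mathcal{E}u_0)' - A(\cdot)\mathcal{E}u_0 \in L^{p}(0,T,v_{\gamma};X_0)$, and the corresponding norm of $g$ is controlled by $\|f\|_{L^p(0,T,v_\gamma;X_0)} + \|u_0\|_{B^s_{q,p}}$. To solve for $v$, extend the coefficients $a_{\alpha\beta}$ to all of $\R$ by setting them equal to $a_{\alpha\beta}(0,x)$ for $t<0$ and $a_{\alpha\beta}(T,x)$ for $t>T$; condition (C) is preserved. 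Extend $g$ by zero outside $(0,T)$ and consider, for $\lambda\ge\lambda_0$ given by Theorem \ref{teo:mainxdep},
\[
\tilde v'(t) + (\lambda+A(t))\tilde v(t) = e^{-\lambda t}g(t),\qquad t\in\R.
\]
Theorem \ref{teo:mainxdep} applied with the (unweighted in time) weight $v\equiv 1$, or directly with $v_\gamma$ noting $v_\gamma\in A_p(\R)$ since $\gamma\in[0,p-1)$, produces a unique solution $\tilde v \in W^{1,p}(\R,v_\gamma;X_0)\cap L^p(\R,v_\gamma;X_1)$. Then $v(t):=e^{\lambda t}\tilde v(t)$ solves $v'+A(t)v = g$ on $\R$; restricting to $(0,T)$ and using the finiteness of $T$ to absorb the factor $e^{\lambda T}$ into the constant gives the desired maximal regularity estimate on $(0,T)$ with no $\lambda$-penalty. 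Causality (i.e.\ $v$ vanishes on $(-\infty,0)$) follows from the uniqueness assertion of Theorem \ref{teo:mainxdep} applied backwards in time, using that $g$ and the right-hand side are supported in $[0,T]$; hence $v(0)=0$.

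Combining the two pieces yields existence of a strong solution $u\in\mathbb{E}_T$ with $u(0)=u_0$, which belongs to $C([0,T];B^s_{q,p}(\R^d,w))$ by the trace embedding above. Uniqueness follows since the difference of two solutions satisfies the same equation with zero data and hence vanishes by Theorem \ref{teo:mainxdep} applied to its zero extension outside $(0,T)$. The a priori estimate assembles from the bound on $g$, the maximal regularity estimate for $\tilde v$ on $\R$, the factor $e^{\lambda T}$ absorbed into $C$, and the continuous embedding $\mathbb{E}_T\hookrightarrow C([0,T];B^s_{q,p})$. The main technical obstacle is the first paragraph: verifying that $(X_0,X_1)_{\theta,p}=B^{2m\theta}_{q,p}(\R^d,w;\C^N)$ and that the trace theorem and extension operator go through in the weighted time-setting with $v_\gamma(t)=t^\gamma$; once this identification is in hand, the rest of the argument is a routine reduction to the whole-line problem already settled in Theorem \ref{teo:mainxdep}.
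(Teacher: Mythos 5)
Your proposal is correct and follows essentially the same route as the paper: both reduce the initial value problem on $(0,T)$ to the whole-line Theorem~\ref{teo:mainxdep} after an exponential shift $v=e^{-\lambda t}u$ to absorb the parameter $\lambda_0$, and both rest on the identification $(X_0,X_1)_{1-\frac{1+\gamma}{p},p}=B^s_{q,p}(\R^d,w)$ (via Bergh--L\"ofstr\"om/Triebel in the unweighted case and Bui in the weighted case). The difference is one of packaging: the paper compresses your paragraphs on the extension operator $\mathcal{E}$, the reduction to zero initial data, and causality into a citation to Pr\"uss--Simonett and to \cite[Section 4.4, Definition 4.11]{GV}, which contain precisely this passage from maximal $L^p_v$-regularity on $\R$ to well-posedness of the weighted initial value problem plus the trace embedding into $C([0,T];(X_0,X_1)_{1-\frac{1+\gamma}{p},p})$; you re-derive that reduction by hand.

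Two minor remarks. First, extending the coefficients to all of $\R$ is superfluous: condition (C) already assumes $a_{\alpha\beta}$ is defined and satisfies the required bounds on all of $\R\times\R^d$, so the whole-line theorem applies directly. Second, the causality step (``uniqueness applied backwards in time'') is a bit loose as phrased; the clean statement is that the unique whole-line solution produced by \cite[Theorem 4.9]{GV} is given by the evolution-family convolution $\int_{-\infty}^t(\cdots)\,ds$, which vanishes on $(-\infty,0)$ when the source is supported in $[0,T]$, and this is what yields $v(0)=0$. Neither point is a genuine gap.
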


\begin{proof}
Substituting $v(t,\cdot) = e^{-\lambda t} u(t,\cdot)$ it follows that we may replace $A$ by $\lambda+A$ for an arbitrary $\lambda$. Therefore, extending $f$ as zero outside $(0,T)$, by Theorem \ref{teo:mainxdep} we may assume that $A$ has maximal $L^p_v$-regularity as defined in \cite[Definition 4.11]{GV} for any $v\in A_p$.  Recall from \cite[Example 9.1.7]{GrafakosModern} that $v_{\gamma}\in A_p$.

By \cite{PS04} and the maximal $L^p$-regularity estimate from Theorem \ref{teo:mainxdep} (also see \cite[Section 4.4]{GV}), we need that $u_0\in (X_0, X_1)_{1-\frac{1+\gamma}{p},p}$ to obtain the well-posedness result and the estimate. The latter real interpolation space can be identified with $B^{s}_{q,p}(\R^d,w)$. Indeed, in the case $w=1$, this follows from \cite[Theorem 6.2.4]{BergLof} or \cite[Remark 2.4.2.4]{Tr1}. In the weighted setting this follows from the inhomogeneous case of \cite[Theorem 3.5]{Bui82}.
\end{proof}

\def\cprime{$'$}

\end{document}